\documentclass[12pt]{article}

\usepackage[english]{babel}
\usepackage[utf8]{inputenc}
\usepackage[T1]{fontenc}

\usepackage{graphicx}
\usepackage{caption, subcaption}
\usepackage[title]{appendix}
\usepackage{tikz}
\usepackage{pgfplots}
\usepackage{cutwin}
\graphicspath{ {images/} }
\usetikzlibrary{intersections,decorations.pathreplacing,decorations.markings,calc,angles,quotes,arrows.meta,pgfplots.fillbetween,patterns}

\usepackage[a4paper,top=2.9cm,bottom=2cm,left=2.5cm,right=2.5cm,marginparwidth=1.75cm]{geometry}

\usepackage{amsmath,yhmath}
\usepackage{amsthm}
\usepackage{bm}
\usepackage{amssymb}
\usepackage{hyperref}
\usepackage{cleveref}
\usepackage{epstopdf}
\usepackage{comment}
\usepackage{todonotes}
\usepackage{color}
\usepackage{float}

\usepackage[numbers]{natbib}

\newtheorem{thm}{Theorem}[section]

\newtheorem{prop}[thm]{Proposition}

\theoremstyle{definition}

\numberwithin{equation}{section}

\newcommand{\R}{\mathbb{R}}

\usepackage{soul}
\sethlcolor{cyan}
\usepackage{xcolor}

\makeatletter
\newcommand{\neutralize}[1]{\expandafter\let\csname c@#1\endcsname\count@}
\makeatother

\title{Finite Element Convergence Analysis For \\ Wave Equations With Time-Dependent Coefficients}

\author{
	Oussama Al Jarroudi\thanks{\footnotesize Department of Mathematics and Computer Science, 
		University of Basel, 
		Spiegelgasse 1, CH-4051 Basel, Switzerland (oussama.aljarroudi@unibas.ch, marcus.grote@unibas.ch).} \and Marcus J. Grote\footnotemark[1]}

\date{\today}

\begin{document}
	\maketitle
	
	\begin{abstract}
	Error estimates are proved for finite element approximations to the solution of second-order hyperbolic partial differential equations with coefficients varying in both space and time. Optimal rates of convergence in the energy norm are proved for the semi-discrete Galerkin finite element solution by introducing a time-dependent Ritz-like projection. Numerical experiments corroborate the rates of convergence and illustrate the localized wave field enhancement in a chain of time-modulated subwavelength resonators.
\end{abstract}
\vspace{0.5cm}
\noindent{\textbf{MSC codes}: 35J05, 35C20, 35P20
		
\vspace{0.2cm}
\noindent{\textbf{Key words}:} wave equation, time-modulated metamaterial, time-dependent coefficients, finite element method, error estimates, time-dependent projection
\vspace{0.5cm}	

\section{Introduction}
Wave phenomena are central to a wide range of scientific and engineering applications in acoustics, 
electromagnetics or elasticity. Traditionally, 
numerical methods, be they finite element or finite difference methods, have primarily addressed wave propagation in passive (or static) media with material properties varying in space but not in time.
Recent advances in material sciences, however, in particular the development
 of time-modulated metamaterials, have brought renewed attention to wave propagation in media whose properties vary in both space and time. Indeed,
these engineered materials enable fine-grained, dynamic control over parameters such as refractive index and impedance, unlocking novel wave-matter interactions
 with no analogue in the static case \cite{ammari2021time}. 
 From a mathematical perspective, these wave phenomena are modeled by second-order hyperbolic partial differential equations with coefficients that depend explicitly 
on both space and time, which introduces new analytical and computational challenges.

To this day, the Finite Element Method (FEM) remains probably 
the most versatile numerical method for the simulation of wave phenomena, as it
naturally allows for high-order polynomial approximations, a key feature to efficiently represent the oscillatory nature of waves, but also accomodates complex geometry and discontinuous material interfaces.
There is a long history of FEMs for second-order wave equations, starting in the 70's with the 
seminal work of Dupont \cite{dupont1973l2} and Baker \cite{baker1976,baker1976error} who first proved optimal $H^1$- and $L^2$ convergence rates for standard conforming FEM. Later, convergence was proved 
 in situations of low regularity \cite{larsson1991finite}, and more recently in polygons with corner singularities
 \cite{Schwab}.
For non-conforming discontinuous Galerkin (DG) FEMs, convergence for the second-order 
wave equation was established during the past twenty years (e.g., \cite{Riviere,GroteSchneebeliSchoetzauWAVE,GS09,Ern,Dassi}).

While the FE convergence theory for wave equations with spatially variable coefficients is well-established, 
it remains markedly less developed for coefficients that vary simultaneously in space and time, as 
the explicit time-dependence of the differential operator breaks 
the time-translational symmetry that underpins standard energy estimates.
%
In 1986, the seminal work of Bales \cite{bales1986higher} first established the convergence of the 
FEM for the wave equation when the coefficient in the elliptic operator depended on time. However, that analysis did not 
encompass the gain/loss term in modern time-modulated metamaterials due to the time derivative of the material coefficient, which periodically dampens or drives the energy of the system.

%
%

Our work bridges this gap as it presents a finite element error analysis for a general class of second-order hyperbolic partial differential equations whose material properties vary in both space and time. 
Here, the main challenge resides in controlling the error growth induced by the spatio-temporal variations of the medium's properties. To overcome this difficulty,
 we introduce a novel time-dependent Ritz-like projection, following ideas from \cite{wheeler1973priori, lin1991ritz}.  When combined with a duality argument reminiscent
of the Aubin-Nitsche trick, this projection enables the derivation of optimal a priori error estimates in the $H^1$-norm for the semi-discrete FEM solution. 
%
%
%
%
%
%
%
    While our primary focus here lies in the acoustic wave equation with space-time modulated bulk modulus,
 $\kappa(\mathbf{x},t)$, and mass density, $\rho(\mathbf{x},t)$, the
%
%
%
mathematical framework also applies to electromagnetics, for instance, when waves propagate through a
medium whose permittivity or permeability are actively modulated in time.
%

The remaining part of this work is structured as follows: In Section 2, we introduce the strong formulation of the model problem. Next, in Section 3, we state the semi-discrete Galerkin finite element formulation. In Section 4, we introduce a time-dependent Ritz-like projection, which will prove key in our analysis and lead to optimal $H^1$-error estimates in Section 5. Finally, in Section 6, we provide numerical experiments which
validate our theory and but also illustrate the intriguing physical phenomena exhibited by waves propagating across space-time modulated media.

\section{Wave equation in a time-varying medium}
Let $\Omega \subset \R^d$, $d\geq 1$, be an open, bounded Lipschitz-domain 
either convex or with a smooth boundary $\partial\Omega$, and 
let $J = (0, T)$ be a time interval up to a final time $T>0$. We consider the following time-dependent acoustic wave equation:
	\begin{equation}\label{eq:wave}
		\frac{\partial}{\partial t } \left(\frac{1}{\kappa(x,t)} \frac{\partial u(x,t)}{\partial t}\right) - \nabla \cdot \left( \frac{1}{\rho(x,t)} \nabla u(x,t) \right) = f(x,t), \qquad (x, t) \in \Omega \times J
	\end{equation}
	with initial conditions
		\begin{equation}\label{eq:ic}
		u(x,0) = u_0(x), \qquad 
		\frac{\partial }{\partial t }u(x,0)  = v_0(x), \qquad\qquad x \in \Omega
	\end{equation}
	and homogeneous Dirichlet boundary conditions for simplicity. 
	Both the bulk modulus, $\kappa(x,t)>0$, and the density, $\rho(x,t)>0$,
	may vary in space and time and are assumed sufficiently smooth;
	moreover, there exist two positive constants $C_{\star}$ and $C^{\star}$ such that
\begin{equation} 
0 < C_{\star} \leq 1/\kappa(x,t) , \, 1/\rho(x,t) \leq C^{\star}, \qquad\left( x,t \right) \in \Omega \times J.
\end{equation}

Rewriting \eqref{eq:wave} equivalently as
\begin{equation}
\label{eq:kapparho}
\frac{1}{\kappa(x,t)} \frac{\partial^2 u}{\partial t^2} -\frac{\kappa_t(x,t)}{\kappa^{2}(x,t)}  \frac{\partial u}{\partial t}  - \nabla \cdot\left(\frac{1}{\rho(x,t)}\nabla u\right)= f(x,t), \qquad\left( x,t \right) \in \Omega \times J,
\end{equation} 
  leads us to consider in our analysis the following more general form:
\begin{equation} \label{eq:wave_viscoelast}
\frac{1}{\kappa(x,t)} u_{tt} + \mathcal{B}(t) u_{t} + \mathcal{A}(t) u = f(x,t),
\end{equation} 
where $\mathcal{A}(t)$ is a self-adjoint, coercive and linear second-order elliptic partial differential 
%
and $\mathcal{B}(t)$ an arbitrary linear bounded  operator.
The well-posedness of the variational formulation of \eqref{eq:wave_viscoelast} follows from 
(\cite{lions1965some}, Theorem 1.1) under hypotheses about the temporal regularity of $\mathcal{A}(t)$ and $\mathcal{B}(t)$.
Although $1/\kappa(x,t)$ and $1/\rho(x,t)$ in \eqref{eq:kapparho} are both positive and bounded below away from zero, the time derivative $\kappa_t(x,t)$ of $\kappa(x,t)$
will generally change sign over time and thus either dampen or amplify the wave energy.
%
%

\section{Finite element formulation}
Let $H^{r}(\Omega)$ denote the standard Sobolev space on $\Omega$ with norm $\| \phi \|^2_{r} =  \sum_{|\alpha| \leq r} \| \partial^\alpha \phi \|_{L^2(\Omega)}^2 $ and
let $H_{0}^{1}(\Omega)$ be the closure of $C_{0}^{\infty}(\Omega)$ in $H^1(\Omega)$. The standard $L^2$ inner product and norm are denoted by $(\phi,\psi)$ and $\| \phi \| = (\phi, \phi)^{1/2}$, respectively. 

Given a regular partition of $\Omega$ into simplicial elements of mesh size $h>0$,
we seek the numerical solution in finite-dimensional subspaces \( S_h \subset H_{0}^{1}(\Omega)\) parameterized by $h$. We assume that $S_h$ has the following standard approximation property: there exists a constant $C > 0$, independent of $h$, such that 
\begin{equation} \label{eq:approx_prop}
\min_{\chi \in S_{h}} \left( \| v - \chi \| + h \| v - \chi \|_{1} \right) \leq C h^{r} \| v \|_{r} \quad \forall  v \in H_{0}^{1}(\Omega) \cap H^{r}(\Omega),
\end{equation}
for $r\geq 2$.
Standard ($H^1$-conforming) finite element spaces $S_h$ of continuous, piecewise polynomials of degree $r-1$, 
for instance, satisfy \eqref{eq:approx_prop} for a sequence of shape-regular and quasi-uniform triangulations. 


The semidiscrete finite element approximation to the solution $u$ of \eqref{eq:wave} is defined as a map $U_h : [0, T) \rightarrow S_{h}$ satisfying
\begin{align}\label{weakeq:wave}
\left( \frac{1}{\kappa(\cdot, t)} U_{h,tt}(t), \chi \right) + \left( a(\cdot, t) \nabla U_h(t), \nabla\chi \right) + \left( b(\cdot, t) U_{h,t}(t), \chi \right) = \left( f(t), \chi \right)
\end{align}
for all $\chi \in S_{h}$ and $t \in J$, where $a(x,t) = 1 / \rho(x,t)$ and $b(x,t) = -\kappa_t(x,t) / \kappa^{2}(x,t)$, while the subscript $t$ denotes partial differentiation with respect to time. 
Although $a(x,t)$ is positive and bounded below away from zero, $b(x,t)$ will generally change sign and thus either dampen or amplify the wave energy.
The initial conditions
\begin{equation}
\label{eq:icfem}
U_h(0)  = u_{0,h}\in S_{h}, \qquad U_{h,t}(0) = v_{0,h} \in S_{h}
\end{equation}
are chosen to satisfy
\begin{equation} \label{eq:initial_approx}
\| u_{0,h}\ - u_0 \|_1 \leq C h^{r-1} \| u_0 \|_{r},\qquad \|v_{0,h}  - v_0 \| \leq C h^r \| v_0 \|_{r}\,.
\end{equation}

If we let $A(t; \cdot, \cdot)$ and $B(t; \cdot, \cdot)$ denote the time-dependent bilinear forms on $H_{0}^{1}(\Omega) \times H_{0}^{1}(\Omega)$ associated with the  operators  $\mathcal{A}( t)$ and $\mathcal{B}( t)$, respectively, we can rewrite \eqref{weakeq:wave} equivalently as
\begin{align}
\label{eq:weakform}
&\left(\frac{1}{\kappa(\cdot, t)}U_{h,tt}(t), \chi\right) + B\left(t; U_{h,t}(t), \chi\right) + A(t; U_h(t), \chi)  = (f(t), \chi),
\end{align} 
for all $\chi \in S_h,  t > 0$.

 \section{A time-dependent projection}
The error analysis relies on a time-dependent Ritz-like projection $w(t):[0,T] \to S_h$, which maps the exact solution $u(t)$ onto the finite element space. It incorporates the gain/loss term $b(x,t)\,\partial_t u(t)$ from  \eqref{weakeq:wave} so that $w(t)$ faithfully reflects the attenuation or amplification in the true solution. Once we have shown that the time-dependent projection is well-defined, we derive upper bounds for the error between $w(t)$ and $u(t)$.

For $\gamma>0$ a positive and sufficiently large constant,
we define the {\em time-dependent projection} $w(t) : [0,T] \to S_h$ of the exact solution $u(t)$ as the solution of the first-order differential equation
\begin{equation}
\label{TimeDepProj}
(a(.,t) \nabla(w-u), \nabla \chi)+\gamma(\nabla(w-u)_{t}, \nabla \chi)+\left(b(.,t)\frac{\partial}{\partial t}(w-u), \chi\right)=0, \quad \forall\chi \in S_h, t > 0,
  \end{equation}
  with initial condition $w(0) \in S_h$ given by the standard Ritz projection of $u_0$, for instance:
  \begin{equation}
\label{eq:icw}
(\nabla(w(0)-u_0), \nabla \chi)=0, \quad \forall\chi \in S_h.
  \end{equation}
  
The term involving \(\gamma\) in equation \eqref{TimeDepProj} acts on the one hand as a penalization factor;
specifically, it counteracts negative values of $b(x,t)$, which actively amplify wave energy, and thus 
guarantees the stability and well-posedness of the time-dependent projection $w(t)$.
 On the other hand, it introduces the diffusion needed for estimating the error  \( \left\| w(t)-u(t) \right\|_{L^{2}(\Omega)} \) 
below through a modification of the standard  Aubin-Nitsche trick in conjunction with results from elliptic regularity theory.
%

To show that $w(t)$ is indeed well-defined, let
$ \left\{\psi_k\right\}_{k=1}^N,$ denote a basis of $S_h$. Then
\[
w(x, t) = \sum_{k=1}^N c_k(t) \psi_k(x), \qquad c_k(t)\in\R, 
\]
and we can rewrite \eqref{TimeDepProj} equivalently as 
\begin{equation}
\label{Proj-exisuniq}
\left ( \gamma \bm{K + B_\kappa}(t)\right) \frac{d}{d t} \underline{C}(t) + \bm{A_\rho}(t) \underline{C}(t) = \underline{F}(t), \qquad 0<t<T
\end{equation}
for the unknown coefficients $\underline{C}(t) = \big(c_1(t), \ldots, c_N(t)\big)^\top $ of $w(t)$. 
Here 
\[\bm{A_\rho}(t) = \big(a(.,t) \nabla \psi_k, \nabla \psi_\ell\big)_{1\leq k, \ell \leq N}\] 
denotes a time-dependent stiffness matrix weighted by  $a(x,t)= 1/\rho(x,t)$
\[\bm{K}= \big( \nabla \psi_k, \nabla \psi_\ell\big)_{1\leq k, \ell \leq N}\] 
the standard stifness-matrix, whereas
\[\bm{B_\kappa}(t) = \big(b(.,t) \psi_k, \psi_\ell\big)_{1\leq  k, \ell \leq N}\] 
denotes a time-dependent mass matrix weighted by $b(x,t)=-\kappa_t(x,t) / \kappa^{2}(x,t)$, while the right-hand side
$\underline{F}(t) = \big(F_1(t), \ldots, F_N(t)\big)^\top $ is given by
\[
F_\ell(t) = \big(a(.,t) \nabla u, \nabla \psi_\ell\big) +\gamma \big(\nabla u_t, \nabla \psi_\ell\big) + \big(b(.,t) u_t, \psi_\ell\big), \qquad \ell = 1, \ldots, N.
\]

Since $\bm{B_\kappa}(t)$ and $\bm K$ are symmetric, while $\bm K$ is also positive definite, 
the $N\times N$ system of linear ordinary differential equations \eqref{Proj-exisuniq} has a unique solution
$\underline{C}(t)$ if $\gamma \bm{K +B_{\kappa}}(t)$ remains positive definite for all $t>0$. 
Hence for $\gamma > \Lambda \,( = \max\{0,-\lambda_{\min}({\bm B_\kappa}(t))/\lambda_{\min}({\bm K})\})$
sufficiently large, there exists a unique solution $\underline{C}(t)$ of \eqref{Proj-exisuniq} and therefore 
$w(t)$ is well-defined for $t\in J=(0,T)$; note that $\Lambda$ is bounded above independently of $h$ 
for shape-regular and quasi-uniform meshes.

%
%
%
%
%

For the subsequent analysis, we remark that \eqref{TimeDepProj}  admits 
the following equivalent integral formulation:
\begin{align}
\label{Proj-integform}
\gamma &(\nabla(w-u)_{t}(t), \nabla \chi)+\left(b(.,t)(w-u)_{t}(t), \chi\right)+ \int_0^t\left(a(.,t) \nabla(w-u)_{t}(\tau), \nabla \chi\right)  d \tau   \\ 
&+(a(.,t) \nabla(w-u)(0), \nabla \chi)=0, \qquad \forall \chi \in S_h, \quad t\in J=(0,T). \notag
  \end{align}
Note that $a(.,t)$ under the integral sign is merely evaluated at the upper limit of integration.

Next, we consider the second-order elliptic differential operator $\tilde{B}(t)$, 
\begin{equation}
\label{Def-Btilde}
\tilde{B}(t)u= -\gamma \Delta u + b(x,t) u
\end{equation}
and denote by $\tilde{B}(t;.,.) $ the associated symmetric bilinear form; again, for $\gamma$ 
sufficiently large $\tilde{B}(t;.,.) $ is also coercive on $H^1_0(\Omega)$ uniformly in time.
Moreover, we denote by  $P_h=P_h(t)$ : $H^1_0(\Omega) \rightarrow S_h$ the (time-dependent) Ritz projection associated with the bilinear form $\tilde{B}(t;.,.)$: For every $ v \in H_0^1(\Omega)$,
\begin{align}
\label{defritz}
\tilde{B}\left(t ; P_h(t) v-v, \chi\right)=0, \qquad \forall \chi \in S_h, \quad t \in J.
\end{align}
Thus, equation \eqref{Proj-integform}, which defines the time-dependent projection $w(t)$ in integral form, 
can be expressed in terms of the bilinear forms  $A(t;.,.) $  and $\tilde{B}(t;.,.) $ as      
\begin{align}\label{Proj-bintegform}
\tilde{B} \left(t ;(w-u)_t, \chi\right)+\int_0^t A\left(t ;(w-u)_t(\tau), \chi\right) d \tau +A(t; (w-u)(0),\chi) =0, 
\end{align}
for all $\chi \in S_{h}$ and $t\in J$.
 
We shall now derive upper bounds for $\eta=w-u$. To do so, we let $\eta_x = \nabla_x \eta$ denote an arbitrary (scalar) component of $\nabla \eta$ and recall that
\[ \|\eta_x \|=\sup \left\{\left(\eta_x, \varphi\right) | \varphi \in C_0^{\infty}\left(\Omega\right), \|\varphi \|  =1\right\}. \]
%
 For any such $\varphi$ and (fixed) time $0<t<T$, let $\psi = \psi(.,t)\in H_0^{1}(\Omega)$ be the solution of the elliptic boundary-value problem
\begin{equation}
\label{bvppsi}
\left\{\begin{array}{rll}
\tilde{B}(t) \psi & =-\nabla_x \varphi & \quad\mbox{ in }\Omega \\
\psi & =0 & \quad\mbox{ on }\partial\Omega 
\end{array}\right.
\end{equation}
with $\tilde{B}(t)$ defined in \eqref{Def-Btilde}.
We multiply \eqref{bvppsi} by $\psi$, integrate by parts in (the single coordinate) $x$ and use that $\|\varphi\|=1$ to obtain
 \[
   \beta \|\psi\|_{1}^2 \leq \tilde{B}(t; \psi, \psi) = -(\nabla_{x}\varphi, \psi) = (\varphi, \nabla_{x}\psi) \leq \|\varphi\| \|\nabla\psi\| \leq \|\psi\|_{1}
   \]
   where $\beta>0$ is the coercivity constant of \(\tilde{B}(t;\cdot,\cdot)\). Hence, we conclude that
\begin{align} \label{ellipreg}
\|\psi(t)\|_{1} \leq \frac{1}{\beta}, \qquad \forall t\in J.
\end{align}
%
%
%
%
%
Similarly, multiplying \eqref{bvppsi} by $\eta_t$ and integrating by parts in $x$ also yields
\begin{align}\label{AubNietsch}
\left(\nabla_x \eta_t, \varphi\right) =-\left(\eta_t ,\nabla_x \varphi\right)=\left(\eta_t, \tilde{B}(t) \psi\right)
= \tilde{B}\left(t ; \eta_t , \psi\right).
\end{align}      
   
On the other hand, setting $\chi = P_h(t) \psi(t) \in S_h$ in \eqref{Proj-bintegform} implies that
\begin{align*}
\tilde{B}\left(t; \eta_t, \psi-P_h(t) \psi\right)&=\tilde{B}\left(t; \eta_t ,\psi\right)-\tilde{B}\left(t ; \eta_t, P_h(t) \psi\right)\\
&=\tilde{B}\left(t; \eta_t ,\psi\right)+\int_0^t A\left(t ;\eta_t(\tau), P_h(t) \psi \right) d \tau + A\left(t ;\eta(0), P_h(t) \psi \right)
\end{align*}
Therefore, we infer that                                
\begin{align*}
\tilde{B}\left(t; \eta_t ,\psi\right)=\tilde{B}\left(t; \eta_t, \psi-P_h(t) \psi\right)-\int_0^t A\left(t ;\eta_t(\tau), P_h(t) \psi \right) d \tau - A\left(t ;\eta(0), P_h(t) \psi \right)
\end{align*}                
which together with \eqref{AubNietsch} can be rewritten as
\begin{align}\label{estGradetat}
\left(\nabla_x \eta_t, \varphi\right)=\tilde{B}\left(t; \eta_t, \psi-P_h(t) \psi\right)-\int_0^t A\left(t ;\eta_t(\tau), P_h(t) \psi \right) d \tau - A\left(t ;\eta(0), P_h(t) \psi \right).
\end{align}    

To estimate the gradient of $\eta_t$ in the $L^2\text{-Norm}$, we shall bound the three terms on the right of 
\eqref{estGradetat} separately. For the first term, we have
\begin{align*}
\tilde{B}\left(t; \eta_t, \psi-P_h(t) \psi\right)&= \tilde{B}\left(t; w_t-u_t, \psi-P_h(t) \psi\right)\\
&=\underbrace{\tilde{B}\left(t; w_t, \psi - P_h(t) \psi\right)}_{= 0}-\tilde{B}\left(t; u_t, \psi-P_h(t) \psi\right)\\
&=\tilde{B}\left(t; u_t, (P_h(t) -I)\psi\right)\\
&=\tilde{B}\left(t; (P_h(t) -I) u_t, \psi\right)\\
&=\int_{\Omega} \tilde{B}(t) \psi\left(P_h(t)-I\right) u_t(x) \,dx
\end{align*}   
where whe have used definition \eqref{defritz} twice for $P_h$ either applied to $u_t$ or $\psi$.
                     
Since $\psi \in H_0^1 $ solves \eqref{bvppsi},
we obtain through integration by parts 
\begin{align*}
 \int_{\Omega} \tilde{B}(t) \psi\left(P_h(t)-I\right) u_t(x) d x&=\int_{\Omega}-\nabla_x \varphi\left(P_h(t)-I\right) u_t(x) d x\\
&=\int_{\Omega} \varphi\nabla_x \left(P_h(t) u_t-u_t\right) \, dx.
\end{align*}
This yields the following estimate for the first term on the right side of \eqref{estGradetat}:
 \begin{align*}
| \tilde{B}\left(t, \eta_t, \psi-P_h(t) \psi\right)| &=| \left(\nabla_x\left(P_h(t) u_t - u_t\right), \varphi\right)| \\
 &\leqslant C h^{r-1}\left\|u_t\right\|_{r},
\end{align*}
where we have used the standard error estimate for the Ritz-projection,
\begin{align*}
\|\nabla(P_h(t) u-u)\| \leqslant C h^{r-1}\|u\|_r \,.
\end{align*}
%

Next, we estimate the second term on the right side of \eqref{estGradetat} 
\begin{align*}
\left| \int_0^t A\left(t; \eta_t(\tau), P_h(t) \psi\right) d \tau \right| & \leqslant C \int_0^t\left\|\nabla \eta_t(\tau)\right\|\left\|\nabla P_h(t) \psi\right\| d \tau \\
& \leqslant C \int_0^t\left\|\eta_t(\tau)\right\|_{1}\|\psi\|_{1} d \tau
\end{align*}
where we have first applied the Cauchy-Schwarz inequality and then used 
\begin{align*}
 \left\|P_h u-u\right\|_1 \leqslant C\|u\|_1 \,.
\end{align*}
From \eqref{ellipreg}, we thus conclude that 
\begin{align*}
\left|\int_0^t A\left(t; \eta_t(\tau), P_h(t) \psi\right) d \tau \right| \leqslant C \int_0^t\left\|\eta_t(\tau)\right\|_{1} d \tau\,.
\end{align*}
\vspace{0.5em}

Similarly for the third term on the right side of \eqref{estGradetat}, we derive the upper bound
\begin{align}
\left|A\left(t ;\eta(0), P_h(t) \psi \right) \right| \leqslant C \left\| \eta(0) \right\|_1\,.
\end{align}

Finally, we combine the above three estimates with \eqref{estGradetat} to obtain
\begin{align}
\left|\left(\nabla_x\eta_t, \varphi\right)\right|
& \leqslant\left|\tilde{B}\left(t; \eta_t, \psi-P_h(t) \psi\right)\right|+\int_0^t\left|A\left(t; \eta_t(\tau), P_h(t) \psi\right)\right| d \tau \\
& \ +\left|A\left(t; \eta(0), P_h(t) \psi\right)\right| \\
& \leqslant C h^{r-1}\left\|u_t\right\|_r+ C \int_0^t\left\|\eta_t(\tau)\right\|_{1} d \tau + C \left\|\eta(0)\right\|_{1} \,.
\end{align}
Since $\left\|\nabla_x \eta_t\right\|=\sup \left\{\left(\nabla_x \eta_t , \varphi\right), \varphi \in C_0^{\infty}(\Omega),\| \varphi\|=1\right\},$                    
this leads to the upper bound
\begin{align*}
\left\|\eta_t\right\|_1 \leqslant  C \left\|\eta(0)\right\|_{1}  + C h^{r-1}\left\|u_t\right\|_r+C \int_0^t\left\|\eta_t(\tau)\right\|_1 d \tau .
\end{align*}

From \eqref{eq:icw} we infer that $\left\|w(0) - u_0\right\|_{1} = \left\|\eta(0)\right\|_{1} \leq C h^{r-1 }$. Thus, we conclude by Gronwall's lemma that 
%
\begin{align*}
\left\|\eta_t\right\|_{1} \leqslant C h^{r-1}\left\{\left\|u_t\right\|_{r}+\int_0^t\left\|u_t(\tau)\right\|_r d \tau\right\},
\end{align*}
or equivalently
\begin{align}
 \label{etatH1estim}
\left\|\eta_t\right\|_{1} \leq C h^{r-1}\left\|u_t\right\|_{0, r} \qquad \forall t\in \bar{J},
\end{align}
where we have introduced the notation
\begin{equation*}
\left\|u(t)\right\|_{k, r}=\sum_{j=0}^k\left\{\left\|\partial_{t}^j u(t)\right\|_{r}+\int_0^t\left\|\partial_{t}^j u(\tau)\right\|_{r} d \tau\right\}.
\end{equation*}

Given the upper bound \eqref{etatH1estim} for $\eta_t$, we shall estimate  $\eta$ by using
\begin {align} 
\label{etatIneq}
\left\|\eta(t)\right\|_{1}^{2} \leqslant C \left\| \eta(0)\right\|_{1}^{2} + C \int_0^t \left\|\eta_t(\tau)\right\|_{1}^{2} d \tau.
\end{align}
Indeed since
 \begin{align*} 
\eta(t) = \eta(0) + \int_0^t  \eta_t(\tau) d\tau,
\end{align*}
we easily infer using Young's inequality for any $\epsilon>0$ that 
\begin{align*}
\left\|\eta(t)\right\|^{2}  &= \left\| \eta(0)\right\|^{2}  +2(\eta(0),\int_0^t \eta_t(\tau) d\tau) +\left\| \int_0^t \eta_t(\tau) d\tau \right\|^{2} \\
   &\leqslant \left\| \eta(0)\right\|^{2}  + 2 \epsilon \left\| \eta(0)\right\|^{2} + \frac{2}{4\epsilon} \left\| \int_0^t \eta_t(\tau) d\tau \right\|^{2} + \left\| \int_0^t \eta_t(\tau) d\tau \right\|^{2} \\
   & \leqslant (1+2\epsilon) \left\| \eta(0)\right\|^{2}  + \left(1+\frac{1}{2\epsilon}\right) \left (  \int_0^t  \left\|  \eta_t(\tau) \right\| d\tau  \right)^2\\
   &\leqslant  (1+2\epsilon) \left\| \eta(0)\right\|^{2}  + \left(1+\frac{1}{2\epsilon}\right) \,T    \int_0^t  \left\|  \eta_t(\tau) \right\|^2 d\tau .
\end{align*}
The same argument applied to $\nabla \eta(t)$ then yields \eqref{etatIneq} after summation.

Finally, we obtain by \eqref{etatH1estim}
\begin{align} \label{etaH1estim}
\left\|\eta(t) \right\|_1 \leq C h^{r-1}\left\|u_t\right\|_{0, r} \quad \text{for t} \quad \in \bar{J}
\end{align}

We shall now estimate $\eta_t$ with respect to the $L^2$-norm by a modification of the standard duality argument.
Again, let $\psi$ be the solution to the elliptic boundary value problem
\[
\begin{cases}\tilde{B}(t) \psi=\varphi & \text { in } \Omega \\ \psi=0 & \text { on } \partial \Omega\end{cases},
\]
for $\varphi  \in C_0^{\infty}\left(\Omega\right)$ smooth and  $\|\varphi\|=1$. From elliptic regularity theory (eg. \cite{schechter1963}) we know that $\|\psi\|_{2} \leqslant C_2\|\varphi\|=C_2 $. 

As previously we have for $\eta=w-u$,
\begin{align*}
& \left(\eta_{t,} \varphi\right)=\int_{\Omega} \eta_t(x, t) \varphi(x) d x=\int_{\Omega} \eta_t(x,t) \tilde{B}(t) \psi(x) d x \\
& =\tilde{B}\left(t ; \eta_t , \psi\right) \\
& =\tilde{B}\left(t ; \eta_t, \psi-P_h(t) \psi\right)+\tilde{B}\left(t ; \eta_t, P_h (t)\psi\right). \\
& =\tilde{B}\left(t ; \eta_t, \psi-P_h(t) \psi\right)-\int_0^t A\left(t; \eta_t(\tau), P_h(t) \psi\right) d \tau - A\left(t; \eta(0), P_h(t) \psi\right)  \\
&=\tilde{B}\left(t ; \eta_t, \psi-P_h(t) \psi\right)+\int_0^t A\left(t; \eta_t(\tau), \psi - P_h(t) \psi\right) d \tau -\int_0^t A\left(t; \eta_t(\tau), \psi\right) d \tau\\ 
&- A\left(t; \eta(0), P_h(t) \psi\right)  .
\end{align*}
Hence, the triangle inequality immediately yields
\begin{align}
\left|\left( \eta_t, \varphi \right)\right| 
& \leq \overset{(\mathrm{I})}{\left|\tilde{B}\left(t; \eta_t , \psi - P_h(t) \psi\right)\right|}
+ \overset{(\mathrm{II})}{\int_0^t \left|A\left(t; \eta_t (\tau), \psi - P_h(t) \psi\right)\right| d \tau} \notag \\
& \quad + \underset{(\mathrm{III})}{\int_0^t \left|A\left(t; \eta_t(\tau), \psi\right)\right| d \tau} +  \underset{(\mathrm{IV})}{\left|A\left(t; \eta(0), P_h(t) \psi\right)\right| }.
\end{align}
To derive an upper bound for $\eta_t$, we shall estimate each individual term (I)--(IV) separately.

For (I), we have following similar arguments as previously:
\begin{align*}
\tilde{B}\left(t ; \eta_t, \psi-P_h(t) \psi\right)&=\tilde{B}\left(t ; w_t-u_t,\left(I-P_h(t)\right) \psi\right)\\
& =\underbrace{\tilde{B}\left(t; w_t, \left(I-P_h(t)\right) \psi\right)}_{= 0}- \tilde{B}\left(t; u_t, \left(I-P_h(t)\right) \psi\right)\\
& = \tilde{B}\left(t; \left(P_h(t)-I\right) u_t, \psi\right) \\
& =\left(\left(P_h(t)-I\right) u_t, \varphi\right),
\end{align*}
where we have used the definitions of $\eta$, $\psi$ and $P_h(t)$. Since $\|\varphi\|=1$, we thus immediately conclude that
\begin{align}
\left|\tilde{B}\left(t; \eta_t , \psi - P_h(t) \psi\right)\right| = \left|\left(\left(P_h(t)-I\right) u_t, \varphi\right) \right|  \leqslant C h^r\left\|u_t\right\|_{r} .
\end{align}

Next, for (III) we first integrate parts in the definition of $A\left(t; \eta_t(\tau), \psi(t) \right)$, which yields
\begin{align*}
\left|A\left(t; \eta_t(\tau), \psi(t) \right) \right|= \left|\int_{\Omega} a(x, t) \nabla \eta_t(x, \tau) \cdot \nabla \psi(x) d x \right|& \leqslant C\int_{\Omega}\left|\eta_t(x,\tau)\right|\left|\Delta \psi(x)\right| \,d x.
\end{align*}
By elliptic regularity, we infer that
\begin{align*}
\int_{\Omega}\left|\eta_t(x, \tau)\right|\left|\Delta \psi(x,t)\right| d x & \leqslant\left\|\eta_t(\tau)\right\|\|\psi(t)\|_{2}  \leqslant C_2\left\| \eta_t(\tau)\right\|, \label{RitzProj-ineq}
\end{align*}
which after time integration leads to the upper bound
\begin{align}
\int_0^t  \left|A\left(t; \eta_t(\tau), \psi \right)\right| d\tau  \leqslant C_2 \int_0^t\left\| \eta_t(\tau)\right\| d \tau .
\end{align}

For (II), we immediately infer by Cauchy-Schwarz that
\begin{align*}
\int_0^t \left| A\left(t; \eta_t(\tau), \psi- P_h(t) \psi\right)\right| d\tau  \leqslant C_2 \int_0^t\left\| \nabla \eta_t(\tau)\right\| \left\| \psi-P_h(t) \psi \right\| d \tau.
\end{align*}
Then, \eqref{etaH1estim} together with 
\begin{align} \label{H1ProjEst}
\left\| P_h(t) \psi - \psi \right\|_{1}  \leqslant  C_2 h \left\| \psi(t) \right\|_{2}  \leqslant C_2 h               
\end{align}       
 implies that 
\begin{align}
\int_0^t \left| A\left(t; \eta_t(\tau), \psi- P_h(t) \psi\right)\right| d\tau  \leqslant  C h^{r} \int_0^t\left\|u_t(\tau)\right\|_{0, r}\,d \tau .
\end{align}                     

Finally for term (IV), \eqref{H1ProjEst} immediately yields
\begin{align*}
\left|A\left(t; \eta(0), P_h(t) \psi\right)\right|  \leq C h \left\| \eta(0) \right\|_1 \leq C h^{r}.
\end{align*}

By combining the above four estimates for (I)--(IV), we obtain the upper bound
\begin{align*}
 \left|\left( \eta_t, \varphi \right)\right| \leqslant Ch^r +C h^r\left\|u_t\right\|_{r}  + C_2 \int_0^t\left\| \eta_t(\tau)\right\| d \tau + C h^{r} \int_0^t\left\|u_t(\tau)\right\|_{0, r}  d \tau.
 \end{align*}        
Taking the supremum over $\varphi$ with $\| \varphi\|=1$ yields the estimate
 \begin{align*}
 \left\| \eta_t(t) \right\| \leqslant Ch^r + C h^r\left\|u_t\right\|_{r,}  + C_2 \int_0^t\left\| \eta_t(\tau)\right\| d \tau + C h^{r} \int_0^t\left\|u_t(\tau)\right\|_{0, r}  \,d \tau.
 \end{align*}  
 
 To conclude we use Gronwall's lemma, which implies that 
 \begin{align}\label{etatL2estim}
 \left\| \eta_t(t) \right\| \leqslant C h^r\left\|u_t\right\|_{0, r}.  
  \end{align}  
  
Again by using  
\begin {align*}
\left\|\eta(t)\right\|^{2} \leqslant C \left\| \eta(0)\right\|^{2} + C \int_0^t \left\|\eta_t(\tau)\right\|^{2} d \tau
\end{align*}
we obtain by \eqref{etatL2estim}
\begin{align} 
\label{etaL2estim}
 \left\| \eta(t) \right\| \leqslant C h^r\left\|u_t\right\|_{0, r}.
\end{align}

In summary, we have proved the following theorem.
\begin{thm}
\label{theo1}
Let $u$ be the solution to \eqref{eq:wave} with $u_t \in L^{\infty}\left(0,T; H^r(\Omega)\right)$ for $r\geq 2$,
and $w(t)$ be the time-dependent projection defined in \eqref{TimeDepProj}.
Then, there exists a constant $C>0$, independent of $h$, such that it holds for all t $\leqslant T$:
\begin{align}
\left\| w(t)-u(t) \right\|_{L^2(\Omega)} + h \left\| w(t)-u(t)\right\|_{H^1(\Omega)} \leqslant  C h^r\left\|u_t(t)\right\|_{0, r} . 
\end{align}
\end{thm}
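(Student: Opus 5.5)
The theorem collects the estimates derived above for $\eta = w-u$, so the plan is to organize that derivation into two stages: first $H^1$ bounds, then $L^2$ bounds via duality. Throughout, I will use the integral form \eqref{Proj-bintegform} of the projection. This form splits the defining relation into a coercive part $\tilde{B}(t;\eta_t,\chi)$ and a memory part $\int_0^t A(t;\eta_t(\tau),\chi)\,d\tau + A(t;\eta(0),\chi)$. The memory part can be absorbed by Gronwall's lemma.

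\textbf{Step 1 ($H^1$ bound for $\eta_t$).} For each component $\nabla_x\eta_t$, I will test against $\varphi\in C_0^\infty(\Omega)$ with $\|\varphi\|=1$. I then introduce the dual solution $\psi$ of $\tilde{B}(t)\psi=-\nabla_x\varphi$, whose $H^1$ norm is bounded by $1/\beta$ according to \eqref{ellipreg}. Inserting $\chi=P_h(t)\psi$ into \eqref{Proj-bintegform} gives the representation \eqref{estGradetat}. In the first term, Galerkin orthogonality of $P_h(t)$ applied twice reduces it to $(\nabla_x(P_h(t)u_t-u_t),\varphi)$, which is $O(h^{r-1}\|u_t\|_r)$. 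The remaining two terms are bounded by $C\int_0^t\|\eta_t\|_1\,d\tau$ and $C\|\eta(0)\|_1$, using the $H^1$ stability of $P_h(t)$. Since $w(0)$ is the Ritz projection of $u_0$, we have $\|\eta(0)\|_1\le Ch^{r-1}$. Taking the supremum over $\varphi$ and applying Gronwall's lemma then yields \eqref{etatH1estim}. Writing $\eta(t)=\eta(0)+\int_0^t\eta_t$ and applying Cauchy--Schwarz in time gives \eqref{etaH1estim}, which is the $h\|w-u\|_{H^1}$ part of the claim.

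\textbf{Step 2 ($L^2$ bound via modified Aubin--Nitsche).} I take $\psi$ solving $\tilde{B}(t)\psi=\varphi$ with $\|\psi\|_2\le C_2$. This step requires elliptic regularity, which holds because $\Omega$ is convex or smooth and $\gamma$ is large enough that $\tilde{B}$ is uniformly coercive. As before, $(\eta_t,\varphi)=\tilde{B}(t;\eta_t,\psi)$, and inserting $P_h(t)\psi$ splits it into the terms (I)--(IV). These are bounded as follows:
\begin{itemize}
\item Term (I) equals $((P_h(t)-I)u_t,\varphi)=O(h^r\|u_t\|_r)$. This uses the standard $L^2$ Ritz estimate, itself a duality result.
\item Term (II) pairs $\|\eta_t\|_1=O(h^{r-1})$ from Step 1 with $\|\psi-P_h\psi\|_1\le Ch$.
\item Term (III) is bounded by $C_2\int_0^t\|\eta_t\|\,d\tau$. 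Here one moves the gradient onto $\psi$ and uses its $H^2$ bound; since $a$ varies in space, a $\nabla a$ term appears, which is harmless.
\item Term (IV) is bounded by $Ch\|\eta(0)\|_1\le Ch^r$. This uses $A(t;\eta(0),\psi)$ and the $H^2$ regularity of $\psi$, or equivalently the fact that the Ritz projection $w(0)$ gives $\|\eta(0)\|=O(h^r)$.
\end{itemize}
Gronwall's lemma then gives \eqref{etatL2estim}, and integrating in time as before gives \eqref{etaL2estim}. Adding this to $h$ times \eqref{etaH1estim} proves the theorem.

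\textbf{Main obstacle.} I expect the key difficulty to be term (III), the memory term in the duality step. Naively it is only $O(h^{r-1})$, because it contains $\nabla\eta_t$. Closing the estimate at order $h^r$ requires shifting all derivatives onto the dual solution $\psi$ using its $H^2$ regularity, uniformly in $t$, so that only $\|\eta_t\|$ remains and Gronwall's lemma applies. Verifying the uniform-in-time regularity constant $C_2$ for $\tilde{B}(t)$, which has a sign-changing zeroth-order coefficient $b$, is the part I would check most carefully. The key point is that coercivity for large $\gamma$ plus the standard $H^2$ shift theorem makes $C_2$ depend only on $\gamma$, $\|b\|_{L^\infty}$ and $\Omega$.
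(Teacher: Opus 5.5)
Your proposal follows the paper's proof step for step: the $H^1$ duality with $\tilde B(t)\psi=-\nabla_x\varphi$ and $\chi=P_h(t)\psi$ in \eqref{Proj-bintegform}, then the modified Aubin--Nitsche argument with terms (I)--(IV), Gronwall, and integration in time. Your aside on (IV), that $A(t;\eta(0),\psi)$ must be handled separately via the $H^2$ bound on $\psi$ and $\|\eta(0)\|=O(h^r)$, is in fact more careful than the paper's ``immediately yields''.

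One flaw you inherit from the paper is that after Gronwall the $\eta(0)$ terms contribute $Ch^{r-1}\|u_0\|_r$ in $H^1$ and $Ch^{r}\|u_0\|_r$ in $L^2$, and neither is controlled by $\|u_t(t)\|_{0,r}$. For a counterexample, take $u\equiv u_0\notin S_h$ with a suitable $f$: the right-hand side vanishes, yet at $t=0$ the left-hand side is the nonzero Ritz error of $u_0$. So what you actually prove, and the correct statement, has $Ch^r\bigl(\|u_0\|_r+\|u_t(t)\|_{0,r}\bigr)$ on the right.
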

 
In the above proof of the upper bounds for $\eta(t) = \omega(t)-u(t)$ in the $L^2$- and $H^1$-norms, we have also derived in \eqref{etatH1estim} and \eqref{etatL2estim}
estimates for $\eta_t(t)$ in both norms. We shall now first derive  estimates for \(\eta_{tt}(t)\) by following the same procedure as for \(\eta_t(t)\). By induction this leads to 
the following more general result which provides upper bounds for the $k$-th time derivative of $\eta(t)$ in both the $L^2$- and $H^1$-norms.
\begin{prop}
\label{prop1}
Let $u$ be the solution to \eqref{eq:wave} with $ \partial_t^k u \in L^{\infty}\left(0,T; H^r(\Omega)\right)$ for $k \geq 1 $, $r\geq 2$,
and $w(t)$ be the time-dependent projection defined in \eqref{TimeDepProj}.
Then, there exists a constant $C_k>0$, independent of $h$, such that it holds for all t $\leqslant T$:
%
\begin{align}
\left\|\partial_t^k \left(w-u\right)(t) \right\|_{L^2(\Omega)} + h \left\| \partial_t^k \left(w-u\right)(t)\right\|_{H^1(\Omega)} \leqslant  C_k \,h^r\left\|u_t(t)\right\|_{k-1, r} . 
\end{align}
\end{prop}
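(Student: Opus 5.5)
We argue by induction on $k$. The case $k=1$ is exactly \eqref{etatH1estim} and \eqref{etatL2estim} from the proof of Theorem~\ref{theo1}. For the inductive step we assume the bounds for $\partial_t^j\eta$, $1\le j\le k-1$, and repeat for $\partial_t^k\eta$ the two duality arguments used for $\eta_t$. The key identity comes from differentiating the integral form \eqref{Proj-bintegform} $k-1$ times in $t$. Equivalently, we differentiate \eqref{TimeDepProj} $k-1$ times and apply Leibniz's rule to the $t$-dependent coefficients $a$ and $b$, which gives
\begin{align*}
\tilde{B}\left(t;\partial_t^{k}\eta,\chi\right)+\int_0^t A\left(t;\partial_t^{k}\eta(\tau),\chi\right)d\tau + A\left(t;\partial_t^{k-1}\eta(0),\chi\right) = -\sum_{j=1}^{k-1}\binom{k-1}{j}\Big[(\partial_t^j a\,\nabla\partial_t^{k-1-j}\eta,\nabla\chi)+(\partial_t^j b\,\partial_t^{k-j}\eta,\chi)\Big]
\end{align*}
for all $\chi\in S_h$. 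Here we have integrated $\partial_t^{k}\eta$ back in time to reproduce the structure of \eqref{Proj-bintegform}. The right-hand side contains only lower time derivatives of $\eta$, with smooth, bounded coefficients $\partial_t^j a$ and $\partial_t^j b$.

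For the $H^1$ bound we proceed exactly as for \eqref{estGradetat}. We take $\psi$ solving \eqref{bvppsi}, use \eqref{AubNietsch} with $\eta_t$ replaced by $\partial_t^k\eta$, and insert $\chi=P_h(t)\psi$. The term $\tilde B(t;\partial_t^k\eta,\psi-P_h\psi)$ equals $(\nabla_x(P_h\partial_t^k u-\partial_t^k u),\varphi)$, since $\tilde B(t;\partial_t^k w,(I-P_h)\psi)=0$ because $\partial_t^k w\in S_h$. This term is therefore bounded by $Ch^{r-1}\|\partial_t^k u\|_r$. The integral and initial terms are handled as before using \eqref{ellipreg}. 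The new source terms are bounded by $C\sum_{j<k}\|\partial_t^j\eta\|_1\,\|P_h\psi\|_1$, which is $O(h^{r-1}\|u_t\|_{k-1,r})$ by the inductive hypothesis. Gronwall's lemma then closes the estimate.

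For the $L^2$ bound we again mimic the duality argument with $\tilde B(t)\psi=\varphi$ and $\|\psi\|_2\le C_2$. Terms (I)--(IV) carry over verbatim, giving $h^r\|\partial_t^k u\|_r$, the Gronwall term $\int_0^t\|\partial_t^k\eta\|$, and $h\cdot h^{r-1}$ contributions. Each source term is split as $\chi=P_h\psi=\psi-(\psi-P_h\psi)$. The $b$-terms give directly $\|\partial_t^{k-j}\eta\|=O(h^r)$ by induction. For the $a$-terms, the part paired with $\psi-P_h\psi$ gives $\|\partial_t^{k-1-j}\eta\|_1\cdot h=O(h^r)$. In the part paired with $\psi$ we integrate by parts to obtain $(\partial_t^{k-1-j}\eta,\nabla\cdot(\partial_t^ja\nabla\psi))\le C\|\partial_t^{k-1-j}\eta\|\,\|\psi\|_2$. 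The index $k-1-j$ may be $0$; in that case we use the $L^2$ bound for $\eta$ itself from Theorem~\ref{theo1}.

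The main obstacle is this bookkeeping of the commutator terms arising from time-differentiating the coefficients. They must be arranged so that only lower-order derivatives of $\eta$ appear, and these must be estimated in $L^2$ rather than $H^1$, so as not to lose a power of $h$; this is why the integration by parts against the $H^2$-regular $\psi$ is essential. A further subtlety is the initial value $\partial_t^{k-1}\eta(0)$ for $k\ge2$, which is not fixed by \eqref{eq:icw}. It is determined by evaluating the differentiated equation \eqref{TimeDepProj} at $t=0$, which is an elliptic Ritz-type problem for $\tilde B(0)$ with data built from lower derivatives. Hence, inductively, $\|\partial_t^{k-1}\eta(0)\|_1\le Ch^{r-1}\|u_t\|_{k-1,r}$, and the same value at $t=0$ controls the corresponding $L^2$ term after multiplication by $h$. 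Collecting all contributions and applying Gronwall's lemma yields the claimed bound with a constant $C_k$.
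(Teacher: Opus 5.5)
Your proposal is correct and is essentially the paper's argument. You use induction on $k$, differentiate the projection identity, and rerun both duality arguments with the lower time derivatives of $\eta$ bounded inductively. Your Leibniz identity is in fact the accurate form of \eqref{etattProjder}, where $A_t(t;\eta_t(t),\chi)$ should read $A_t(t;\eta(t)-\eta(0),\chi)$.

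The only difference is that you re-integrate $A(t;\partial_t^{k-1}\eta,\chi)$ in time and close with Gronwall. For $k\ge 2$ this term is already of lower order and can be bounded directly, as the paper does.

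One step needs repair; it mirrors the paper's treatment of term (IV). The bound $|A(t;v,P_h\psi)|\le Ch\|v\|_1$ is not available in general, because $P_h\psi$ is $O(1)$ in $H^1$. So for $v=\partial_t^{k-1}\eta(0)$, split $P_h\psi=\psi-(\psi-P_h\psi)$ exactly as you already do for the $a$-source terms. Then use $\|\partial_t^{k-1}\eta(0)\|\le Ch^r$, which the inductive hypothesis at $t=0$ supplies, so no separate elliptic problem at $t=0$ is needed.
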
   
                    
 \begin{proof}
The case $k=1$ has already been established in our proof of Theorem \ref{theo1}. \\
For $k= 2$, we differentiate \eqref{Proj-bintegform} with respect to time:
\begin{align*}
\frac{\partial}{\partial t }\left[ \tilde{B} \left(t ;\eta_t, \chi\right)+\int_0^t A\left(t ;\eta_t(\tau), \chi\right) d \tau + A\left(t ;\eta(0), \chi\right) \right]=0, \quad \forall \chi \in S_{h}, \quad t\in J.
\end{align*}
The time derivetive of the first term, 
\begin{align*}
\frac{\partial}{\partial t } \tilde{B} \left(t ;\eta_t, \chi\right)= \tilde{B} \left(t ;\eta_{tt}, \chi\right) + \tilde{B}_{t} \left(t ;\eta_t, \chi\right),
\end{align*}
 where $ \tilde{B}_{t} \left(t ;\eta_t, \chi\right)$ denotes the bilinear form associated with the operator $\partial_t b(x,t) I$.
Similarly, the time derivative of the second term is
\begin{align*}
\frac{\partial}{\partial t } \int_0^t A\left(t ;\eta_t(\tau), \chi\right) d \tau =  A\left(t ;\eta_t(t), \chi\right) +  A_{t}\left(t ;\eta_t(t), \chi\right).
\end{align*}
Therefore, we obtain for  all $\chi \in S_{h}$ and $0<t<T$:
\begin{align}\label{etattProjder}
 \tilde{B} \left(t ;\eta_{tt}, \chi\right) + \tilde{B}_{t} \left(t ;\eta_t, \chi\right) + A\left(t ;\eta_t(t), \chi\right) +  A_{t}\left(t ;\eta_t(t), \chi\right)+ A_{t}\left(t ;\eta(0), \chi\right) =0. 
\end{align}
Following along the lines of our previous argument, we have for $\nabla_x \eta_{tt}$ an arbitrary component of $\nabla \eta_{tt}$:
\begin{align*}
\left(\nabla_x \eta_{tt}, \varphi\right)& = \tilde{B}\left(t ; \eta_{tt}, \psi\right) \\
&=\tilde{B}\left(t ; \eta_{tt}, \psi -P_h \psi \right) +\tilde{B}\left(t ; \eta_{tt}, P_h \psi \right) \\
&=\tilde{B}\left(t ; \eta_{tt}, \psi - P_h \psi \right) - \tilde{B}_{t} \left(t ;\eta_t, P_h \psi \right)\\
&- A\left(t ;\eta_t(t), P_h \psi\right)- A_{t}\left(t ;\eta_t(t),P_h \psi \right) - A_{t}\left(t ;\eta(0), P_h \psi  \right)  \\
&=\left(\nabla_x\left(P_h u_{tt}- u_{tt}\right), \varphi\right)  - \tilde{B}_{t} \left(t ;\eta_t, P_h \psi \right)\\
&- A\left(t ;\eta_t(t), P_h \psi\right)- A_{t}\left(t ;\eta_t(t),P_h \psi \right) - A_{t}\left(t ;\eta(0), P_h \psi  \right),
\end{align*}         
 where we have used \eqref{etattProjder} for the third equality. 
Again, the four terms on the right-hand side can be bounded as previously.  \\
For $k\geqslant 3$,  the proof follows by induction by repeatedly differentiating \eqref{etattProjder}.
 \end{proof}

 \section{Semi-discrete finite-element approximation}  
 Owing to the the time-dependent Ritz-like projection $w(t)$ introduced in the previous section, we shall now
 prove the following optimal $H^1$-error estimate for the semidiscrete Galerkin finite-element approximation of \eqref{eq:wave}.

\begin{thm} \label{thm:demidisc} 
Let $u$ be the solution to \eqref{eq:wave} with $u_t, u_{tt} \in L^{\infty}\left(0,T; H^r(\Omega)\right)$ for $r\geq 2$,
and  $U_h(t) \in S_h$ be the semi-discrete Galerkin FE-solution of \eqref{eq:weakform}.
Then, there exists a constant $C>0$, independent of $h$, such that it holds for all t $\leqslant T$:
%
%
\[
\|u - U\|_{L^{\infty}\left(0,T; H^1(\Omega)\right)} \leqslant C h^{r-1}.
\]
\end{thm}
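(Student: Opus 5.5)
We split the error as $u-U_h = (u-w) + (w-U_h) = -\eta + \theta$, with $\eta = w-u$ and $\theta = w - U_h \in S_h$. By Theorem~\ref{theo1}, $\|\eta(t)\|_1 \le C h^{r-1}\|u_t\|_{0,r}$, which is uniformly bounded on $\bar J$ because $u_t \in L^\infty(0,T;H^r)$. It therefore remains to show $\|\theta(t)\|_1 \le C h^{r-1}$, and we will do this with a discrete energy estimate for $\theta$.

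\textbf{Error equation for $\theta$.} Subtract \eqref{eq:weakform} from the variational form of the exact equation tested with $\chi\in S_h$. This gives
\[
\left(\tfrac{1}{\kappa}\theta_{tt},\chi\right) + B(t;\theta_t,\chi) + A(t;\theta,\chi) = \left(\tfrac1\kappa \eta_{tt},\chi\right) + B(t;\eta_t,\chi) + A(t;\eta,\chi).
\]
Next use \eqref{TimeDepProj}, written as $A(t;\eta,\chi) + \gamma(\nabla\eta_t,\nabla\chi) + B(t;\eta_t,\chi) = 0$. This cancels the $A$- and $B$-terms on the right, so
\[
\left(\tfrac{1}{\kappa}\theta_{tt},\chi\right) + B(t;\theta_t,\chi) + A(t;\theta,\chi) = \left(\tfrac1\kappa \eta_{tt},\chi\right) - \gamma(\nabla\eta_t,\nabla\chi).
\]
This is exactly why the projection was built to absorb the gain/loss term.

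\textbf{Energy estimate.} Take $\chi=\theta_t$. Define
\[
E(t)=\tfrac12\big(\tfrac1\kappa\theta_t,\theta_t\big)+\tfrac12 A(t;\theta,\theta).
\]
Then
\[
E'(t) = \tfrac12\big(\partial_t(\tfrac1\kappa)\theta_t,\theta_t\big) + \tfrac12 A_t(t;\theta,\theta) - B(t;\theta_t,\theta_t) + \big(\tfrac1\kappa\eta_{tt},\theta_t\big) - \gamma(\nabla\eta_t,\nabla\theta_t).
\]
The first three terms are bounded by $C E(t)$, using the smoothness of $\kappa,\rho$ and the bound $C_\star \le 1/\kappa,\,a$. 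The fourth term is at most $C\|\eta_{tt}\|^2 + C E$, and by Proposition~\ref{prop1} with $k=2$ we have $\|\eta_{tt}\| \le C h^r$; this step uses $u_{tt}\in L^\infty(H^r)$.

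\textbf{Main obstacle.} The last term $\gamma(\nabla\eta_t,\nabla\theta_t)$ cannot be bounded by the energy directly, since $E$ does not control $\|\nabla\theta_t\|$. We remove it by integrating in time first and then integrating by parts:
\[
\int_0^t \gamma(\nabla\eta_t,\nabla\theta_t)\,ds = \gamma(\nabla\eta_t,\nabla\theta)\big|_0^t - \gamma\int_0^t(\nabla\eta_{tt},\nabla\theta)\,ds.
\]
The boundary term satisfies $\le \epsilon\|\nabla\theta(t)\|^2 + C_\epsilon\|\eta_t(t)\|_1^2$ plus a term at $t=0$. The integral term satisfies $\le C\int_0^t(\|\eta_{tt}\|_1^2 + E)\,ds$. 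Theorem~\ref{theo1} and Proposition~\ref{prop1} give $\|\eta_t\|_1,\|\eta_{tt}\|_1 \le Ch^{r-1}$. We absorb $\epsilon\|\nabla\theta(t)\|^2$ into $E(t)$ using the coercivity of $A$ and then apply Gronwall's lemma, obtaining
\[
E(t) \le C\big(E(0) + \|\eta_t(0)\|_1^2 + h^{2(r-1)}\big).
\]

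\textbf{Initial data.} We have $\theta(0) = w(0)-u_{0,h}$, so $\|\theta(0)\|_1 \le \|\eta(0)\|_1 + \|u_0-u_{0,h}\|_1 \le Ch^{r-1}$ by \eqref{eq:icw} and \eqref{eq:initial_approx}. Also $\theta_t(0) = w_t(0)-v_{0,h} = \eta_t(0) + v_0 - v_{0,h}$, whose $L^2$-norm is $O(h^r)$ by \eqref{etatL2estim} and \eqref{eq:initial_approx}. Hence $E(0)\le Ch^{2(r-1)}$. Therefore $\|\theta(t)\|_1 \le C h^{r-1}$ uniformly in $t$, and the triangle inequality gives the claim.
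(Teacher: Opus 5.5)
Your proposal is correct and follows essentially the same route as the paper: the splitting $u-U_h=-\eta+\theta$, the error equation in which \eqref{TimeDepProj} cancels the $A$- and $B$-terms and leaves $(\frac{1}{\kappa}\eta_{tt},\chi)-\gamma(\nabla\eta_t,\nabla\chi)$, testing with $\theta_t$, integrating the $\gamma$-term by parts in time, and closing with Young's inequality and Gronwall, using Theorem~\ref{theo1}, Proposition~\ref{prop1} for $k=1,2$, and the initial-data bounds. In your energy identity you keep the term $\tfrac12(\partial_t(1/\kappa)\theta_t,\theta_t)$ and bound it by $C\,E(t)$; this is the correct form of the corresponding step in the paper, which instead writes this contribution as a nonnegative term and drops it.
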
 
 \begin{proof}
 First, we split the error as 
  \[u(t)-U_h(t)= u(t)-w(t)+ w(t)-U_h(t)= -\eta(t) + \theta(t),\]
 where $w(t) \in S_h$ is the time-dependent projection defined in \eqref{TimeDepProj}.
 
 Then, from  \eqref{TimeDepProj} we easily infer that for every $\chi\in S_h$
 \begin{align*}
 & \left( \frac{1}{\kappa(t)}w_{tt}, \chi\right) + \left(a(t) \nabla w, \nabla \chi \right) + (b(t) w_t, \chi )= \left( \frac{1}{\kappa(t)}w_{tt}, \chi \right)+\left(a(t) \nabla u, \nabla \chi\right)\\
 &+\left(b(t) u_t, \chi \right) -  \gamma \left( \nabla w_t, \nabla \chi \right)+\gamma \left(\nabla u_t, \nabla \chi \right) \\
 & =( \frac{1}{\kappa(t)} w_{tt}, \chi ) - (  \frac{1}{\kappa(t)} u_{tt}, \chi )+( \frac{1}{\kappa(t)} u_{tt}, \chi)+\left(a(t) \nabla u(t), \nabla \chi \right)+(b(t) u_t, \chi ) \\
 &-\gamma \left(\nabla w_t, \nabla \chi \right)+ \gamma \left(\nabla u_{t}, \nabla \chi \right).
  \end{align*}
Here for the sake of conciseness, we omit the $x$ dependence in all coefficients, i.e., we write
$\kappa(t)$ instead of $\kappa(.,t)$, etc.
Since the exact solution $u(t)$ also satisfies \eqref{weakeq:wave}, $w(t)$ satisfies
 \begin{align}\label{eq:werreq}
\left( \frac{1}{\kappa(t)}w_{tt}(t), \chi\right) + \left(a(t) \nabla w(t), \nabla \chi \right) + (b(t) w_t(t), \chi )&= \left( \frac{1}{\kappa(t)} \eta_{tt}, \chi \right)+\left(f(t), \chi \right)\\
&-\gamma \left(\nabla \eta_t(t), \nabla \chi \right). \notag
  \end{align}
  By subtracting \eqref{weakeq:wave} from \eqref{eq:werreq} we obtain 
 the following error equation for $\theta = w - U_h$:
   \begin{equation}
   \label{eq:theta}	
 \left( \frac{1}{\kappa(t)} \theta_{tt}(t), \chi\right) + \left(a(t) \nabla \theta(t), \nabla \chi \right) + (b(t) \theta_t(t), \chi ) =
  \left( \frac{1}{\kappa(t)} \eta_{tt}(t), \chi \right)-\gamma \left(\nabla \eta_t(t), \nabla \chi \right)
  \end{equation}  
for all $\chi \in S_h$ and $t\in J=(0,T)$.     
               
Next, we let $\chi = \theta_t$ in \eqref{eq:theta}. Since we have
   \begin{align*}
    \left( \frac{1}{\kappa(t)}\theta_{tt}, \theta_t \right) =\frac{1}{2} \frac{d}{d t} \left\| \frac{1}{\sqrt{\kappa(t)}} \theta_t \right\|^2 +\frac{1}{2}  \left\| \frac{1}{ \kappa(t)} \theta_t \right\|^2,
   \end{align*}
%
and similarly
\begin{align*}
\left(a(t) \nabla \theta, \nabla \theta_t \right)=\frac{1}{2} \frac{d}{d t}(a(t) \nabla \theta, \nabla \theta)-\frac{1}{2}\left(a^{\prime}(t) \nabla \theta, \nabla \theta\right)
\end{align*}
and 
\begin{align*}
\left(\nabla \eta_t , \nabla \theta_t\right)= \frac{d}{d t }\left(\nabla \eta_t , \nabla \theta\right)-\left(\nabla \eta_{tt} , \nabla \theta\right),
\end{align*}
we can rewrite the error equation~\eqref{eq:theta} as
\begin{align*}
& \frac{1}{2} \frac{d}{d t}\left( \left\| \frac{1}{\sqrt{\kappa(t)}}\theta_t(t) \right\|^2 +(a(t) \nabla \theta , \nabla \theta)\right)  +   \frac{1}{2} \left\| \frac{1}{\kappa(t)}\theta_t(t) \right\|^2 +  \left(b(t) \theta_t, \theta_t\right)  = \left(\frac{1}{\kappa(t)} \eta_{t t}, \theta_t \right)\\
& +\gamma\left(\nabla \eta_{t t},\nabla \theta\right) -\gamma \frac{d}{d t}\left(\nabla \eta_t, \nabla \theta\right)+\left(a_t(t) \nabla \theta, \nabla \theta\right). \notag
\end{align*}

Next, we omit the third (positive) term on the left side and
integrate in time to obtain
\begin{align*}
& \frac{C_{*}}{2}\left\| \theta_t \right\|^{2}+\frac{1}{2}(a(t)\nabla \theta, \nabla \theta)-\frac{C^*}{2} \left\| \theta_t(0) \right\| ^2-\frac{1}{2}(a(0) \nabla \theta(0), \nabla \theta(0)) \leqslant \\
&\int_0^t\left(  \frac{1}{\kappa(\tau)} \eta_{t t}(\tau), \theta_t(\tau)\right) d \tau+\gamma \int_0^t\left(\nabla \eta_{tt}(\tau), \nabla \theta(\tau)\right) d \tau- \\
& \gamma\left(\nabla \eta_{t}, \nabla \theta\right)+\gamma\left(\nabla \eta_t(0), \nabla \theta(0)\right)+\int_0^t\left(a_t(\tau) \nabla \theta(\tau), \nabla \theta(\tau) \right) d \tau  - \int_0^t \left(b(\tau) \theta_t(\tau), \theta_t(\tau)\right) d \tau .
\end{align*}

By using Young's inequality we estimate the terms on the right side for $\varepsilon>0$ as
\begin{eqnarray*}
 \left|(\eta_{tt}, \theta_t)\right| &\leqslant& \varepsilon\left\|\eta_{t t}\right\|^2+\frac{1}{4 \varepsilon}\left\|\theta_t\right\|^2 \\
 \left|\gamma\left(\nabla \eta_{tt} , \nabla \theta\right)\right| &\leqslant& \gamma \varepsilon\left\|\nabla \eta_{tt}\right\|^2+\frac{1}{4 \varepsilon} \gamma\left\|\nabla \theta\right\|^2 \\
 \left|\left(a_t(t) \nabla \theta, \nabla \theta\right)\right| &\leqslant& C_{a }^{\prime}\|\nabla \theta\|^2 \\
 \left|\left(b_t(t) \theta_t, \theta_t \right)\right| &\leqslant& C_{b}\|\theta_t\|^2.
\end{eqnarray*}
This leads to
\begin{align*}
&\frac{C^*}{2}\|\theta_t\|^2+\frac{C_*}{2} \|\nabla \theta\|^2 \leqslant \frac{C^*}{2}\left\|\theta_t(0)\right\|^2+\frac{C^*}{2} \|\nabla \theta(0)\|^2+\gamma \varepsilon\left\|\nabla \eta_t(0)\right\|^2 \\
&+\frac{\gamma}{4 \varepsilon}\|\nabla \theta(0)\|^2+\varepsilon C^* \int_0^t\|\eta_{t t}(\tau)\|^2 d \tau+\frac{C^* +C_b}{4 \varepsilon} \int_0^t\left\|\theta_t(\tau)\right\|^2 d \tau \\
&+\varepsilon \gamma \int_0^t\|\nabla \eta_{t t}(\tau)\|^2 d \tau+\frac{\gamma}{4 \varepsilon} \int_0^t\|\nabla \theta(\tau)\|^2 d \tau + C_{a }^{\prime} \int_0^t\|\nabla \theta(\tau)\|^2 d \tau \\
&+ \frac{\gamma}{4 \varepsilon} \|\nabla \theta\|^2+\gamma \varepsilon\left\|\nabla \eta_t(t)\right\|^2 .
\end{align*}

Rearranging terms and choosing $ \epsilon > \gamma/C_*$, 
we thus obtain
\begin{align}
\label{eqtheta}
 \frac{C^*}{2}\left\|\theta_t\right\|^2+\frac{C_*}{4}\|\nabla \theta\|^2 \leqslant &\, C\mathcal{E}_0^2+
O\left(h^{2r-2}\right)+\frac{C}{4\varepsilon} \int_0^t\left\|\theta_t(\tau)\right\|^2 d \tau \notag \\
& +\left(C_a^{\prime}+\frac{\gamma}{4 \varepsilon}\right) \int_0^t\|\nabla \theta(\tau)\|^2 d \tau, 
\end{align}
where $C$ denotes a generic constant with $\mathcal{E}_0$ defined as
\begin{equation} 
\label{defeps0}
C \mathcal{E}_0^2= \frac{C^*}{2}\left\|\theta_t(0)\right\|^2+\left(\frac{ C^*}{2}+\frac{\gamma}{4 \varepsilon} \right) \|\nabla \theta(0)\|^2+\gamma \varepsilon\left\|\nabla \eta_t(0)\right\|^2 ,
 \end{equation}
  and we have used
   \begin{align*}
    \varepsilon C^{*} \int_0^t\left\|\eta_{tt}(\tau)\right\|^2 d \tau+\varepsilon \gamma \int_0^t\left\|\nabla \eta_{t t}(\tau)\right\|^2 d \tau + \gamma \varepsilon\left\|\nabla \eta_t(t)\right\|^2 =O\left(h^{2r-2}\right)
  \end{align*}
  because of Theorem \ref{theo1} and Proposition \ref{prop1} for $k=1,2$. Hence by Gronwall's lemma applied to
  \eqref{eqtheta}, we obtain
\begin{align}
\label{defeps1}
\|\theta\|_1^2 \leqslant C \mathcal{E}_0^2+C h^{2r-2} ,
\end{align}
where we have dropped the term involving $\theta_t$ on the left of the inequality and used
the equivalence of the $H^1$-norm and $H^1$-semi-norm for $\theta$.

It now remains to estimate $\mathcal{E}_0$ in \eqref{defeps1}. 
Again by Proposition \ref{prop1} with $k=1$, the last term in \eqref{defeps0} is bounded by $O(h^{2r-2})$.
Due to \eqref{eq:initial_approx} and \eqref{eq:icw}, we have
\begin{align*}
\left\| \nabla \theta(0) \right\| \leqslant \left\| w(0) - u_0 \right\|_{1} + \left\| u_0 - U_h(0) \right\|_{1}  \leqslant  C h^{r-1}\left\| u_0 \right\|_{r}
\end{align*}  
%
%
and similarly
\begin{align*}
\left\|  \theta_{t}(0) \right\| &= \left\|w_t(0)- U_{h,t}(0) \right\| =  \left\| w_t(0)- v_0 + v_0- U_{h,t}(0) \right\| \\
&\leqslant  \left\| \eta_{t}(0) \right\|  +  \left\|v_0 - v_{0,h} \right\| \\
&\leqslant Ch^{r},    
\end{align*}
which concludes the proof of the theorem. 
 \end{proof}

\section{Numerical experiments}
\label{sec:numexp}
Here we present several numerical experiments which validate the theory but also illustrate
the complex behavior of waves propagating across a time-varying medium. 
First, we consider two time-modulated smoothly varying media either with or without
added first-order gain/loss term. Then we consider the more realistic model of a one-dimensional
metamaterial as a chain of discontinuous time-modulated subwavelength resonators, which eventually leads
to localized strong amplification of the wave field. 

For the spatial discretization of the computational domain, $\Omega$, we use standard continuous $\mathcal{P}_2$ finite elements with mass-lumping \cite{CohenJolyRobertsTordjman01}. For the time discretization, we either use the standard
leapfrog method, or a combined leapfrog/Crank–Nicolson scheme with added first-order gain/loss term.
Given a sequence of increasingly finer meshes, we calculate the $L^2$- and $H^1$-errors against a
reference solution computed on a FE mesh 16 times finer than the finest mesh
used for the convergence test.

\subsection{Time-modulated  $\rho(x,t)$ and $\kappa(x,t)$}
We first consider the second-order wave equation in the interval $\Omega = (0,1)$ 
\[
\frac 1 {\kappa(x,t)} \, \frac{\partial^2 u}{\partial t^2}  - \frac{\partial}{\partial x} \left( \frac 1{\rho(x,t)} \, \frac{\partial u}{\partial x} \right) = 0,  \qquad x\in\Omega, \quad t>0,
\]
subject to homogeneous Dirichlet boundary conditions. The background medium is static with constant 
density $\rho_0 = 1$ and bulk modulus $\kappa_0 = 1$, yielding a unit wave speed $c_0 = 1$. 
To model a medium in which both inertial ($\rho$) and restorative ($\kappa$) properties 
oscillate within a  resonating region, we
introduce a smooth, localized and time-modulated perturbation centered about $x_r = 0.5$ as 
\begin{equation}
\label{eq:rhokappa}
\rho(x,t) = 1 + \alpha_\rho\, f(x) g(t), \qquad \kappa(x,t) = 1 + \alpha_\kappa\, f(x) g(t),
\end{equation}
with amplitudes $\alpha_\rho = 0.3$, $\alpha_\kappa = 0.5$, the smooth Gaussian profile
\[
f(x) = \frac{1}{2} \exp\left( -\frac{(x - x_r)^2}{2\sigma_r^2} \right), \quad \sigma_r = 0.2,
\]
in space, and the harmonic modulation $g(t) = \sin(2\pi t)$ in time. Hence both $\rho(x,t)$ and $\kappa(x,t)$ are strictly positive and smoothly varying.

The initial condition corresponds to a right-travelling Gaussian pulse initiated from the left of the resonating region about $x=x_0$,
\[
u(x, 0) = \exp\left( -\frac{(x - x_0)^2}{2\sigma^2} \right), \quad x_0 = 0.1,\ \sigma = 0.1,
\]
with the initial velocity set to initially ensure a purely rightward propagating wave inside the background medium.
Over time the wave propagates across the background with unit speed, enters the resonating region near $x\approx0.4$, interacts with the smoothly varying time-modulated medium, and exits the resonating region near $x\approx0.6$.

The semidiscrete finite-element formulation,
\[
M(t)\, \ddot u_h + K(t)\, u_h = 0,
\]
involves a time-dependent mass matrix, $M(t)$, and stiffness matrix, $K(t)$. For
time discretization, we opt for the standard second-order
leapfrog method:
\[
M(t^n)\left(\frac{u^{n+1} - 2u^n + u^{n-1}}{\Delta t^2}\right) + K(t^n)u^n = 0, \qquad n = 1, 2, \dots
\]
with $u^0$ and $u^1$ determined by the initial conditions.

In general both $M(t^n)$ and $K(t^n)$ need to be recomputed, that is, re-assembled and possibly inverted 
at every time-step. 
Due to the separable form of the material coefficients $\rho(x,t)$ and $\kappa(x,t)$ chosen here, however,
both can easily be computed as 
\[
M(t^n) = M_1 + \alpha_\rho\, g(t^n) M_2, \qquad
K(t^n) = K_1 + \alpha_\kappa\, g(t^n) K_2, 
\]
where the matrices $M_1$, $K_1$, $M_2$ and $K_2$ are time-independent and thus need only be computed once.
Here $M_1$ and $K_1$ correspond to the mass and stiffness matrices for the background medium whereas
$M_2$ and $K_2$ involve the spatial perturbation $f(x)$. The solution of the linear system for
$u^{n+1}$ nonetheless requires computing its Cholesky decomposition at every time-step, and not just once
as for a stationary medium. To avoid inverting $M(t^n)$ at every time-step, we apply standard 
mass-lumping to both $M_1$ and $M_2$, thus replacing them by diagonal approximations without affecting the convergence rate. 

   \begin{figure}[t]
    \centering
    \includegraphics[width=0.48\textwidth]{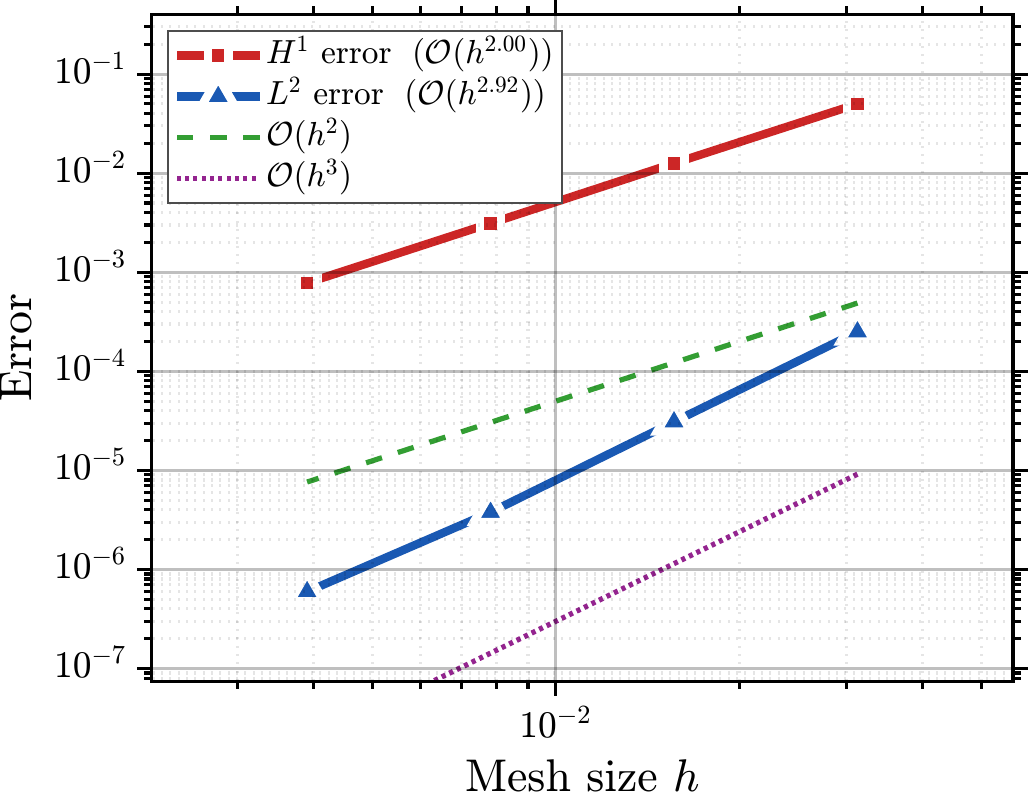}
    \caption{Time-modulated  $\rho(x,t)$ and $\kappa(x,t)$: $L^2$-error (blue) and $H^1$-error (red). 
    The dashed lines indicate the expected convergence rates $O(h^2)$ and $O(h^3)$ for comparison.}
    \label{fig:conv_scenario1}
\end{figure}

In Fig. \ref{fig:conv_scenario1} we observe that the FEM with leapfrog time-stepping achieves the expected
(optimal) second-order convergence using $\mathcal{P}_2$-FE and mass-lumping with respect to the $H^1$-norm. 
For the $L^2$-norm the method even achieves third-order convergence, as expected, as we let the
time-step $\Delta t$ here decrease proportionally to $h^{3/2}$ to match the spatial accuracy.

%
%
%
%

\subsection{Time-modulated $\rho(x,t)$, $\kappa(x,t)$ and $\sigma(x,t)$}
Next, we also include a gain/loss term which typically arises from temporal variations in the bulk modulus, a feature of certain time-modulated metamaterials. Hence we now consider the damped and driven second-order wave equation 
in $\Omega=(0,1)$,
\[
\frac 1 {\kappa(x,t)} \, \frac{\partial^2 u}{\partial t^2}  + \sigma(x,t) \, \frac{\partial u}{\partial t}  - \frac{\partial}{\partial x} \left( \frac 1{\rho(x,t)} \, \frac{\partial u}{\partial x} \right) = 0,  \qquad x\in\Omega, \quad t>0,
\]
with $\rho(x,t)$ and $\kappa(x,t)$ as in \eqref{eq:rhokappa} and 
\[
\sigma(x,t) = \beta_\sigma\, f(x) g(t), \qquad \beta_\sigma = 0.3\, .
\]
Due to the periodic sign change of the coefficient $\sigma(x,t)$, it either acts as a gain when negative (injecting energy) or as a loss when positive (removing energy), and thus actively couples the medium to the wave dynamics.


The semi-discrete FE formulation now yields the second-order system of ordinary differential equations,
\[
M(t) \ddot{u}_h + \Sigma(t) \dot{u}_h + K(t) u_h = 0,
\]
where the entries of the time-dependent loss/gain matrix $\Sigma(t)$ again correspond
to a mass-matrix yet with weight $\sigma(x,t)$.

%

After time discretization, the leapfrog/Crank–Nicolson update becomes
\[
M(t^n)\left(\frac{u^{n+1} - 2u^n + u^{n-1}}{\Delta t^2}\right)
+ \Sigma(t^n) \left(\frac{u^{n+1} - u^{n-1}}{2\Delta t}\right)
+ K(t^n)u^n = 0,\qquad n = 1, 2, \dots
\]
To avoid inverting $M(t^n) + (\Delta t/2)  \Sigma(t^n)$ at every time-step, we again apply standard 
mass-lumping techniques and take advantage of the separable form of the material coefficients. 

 \begin{figure}[htbp]
    \centering
    \includegraphics[width=0.48\textwidth]{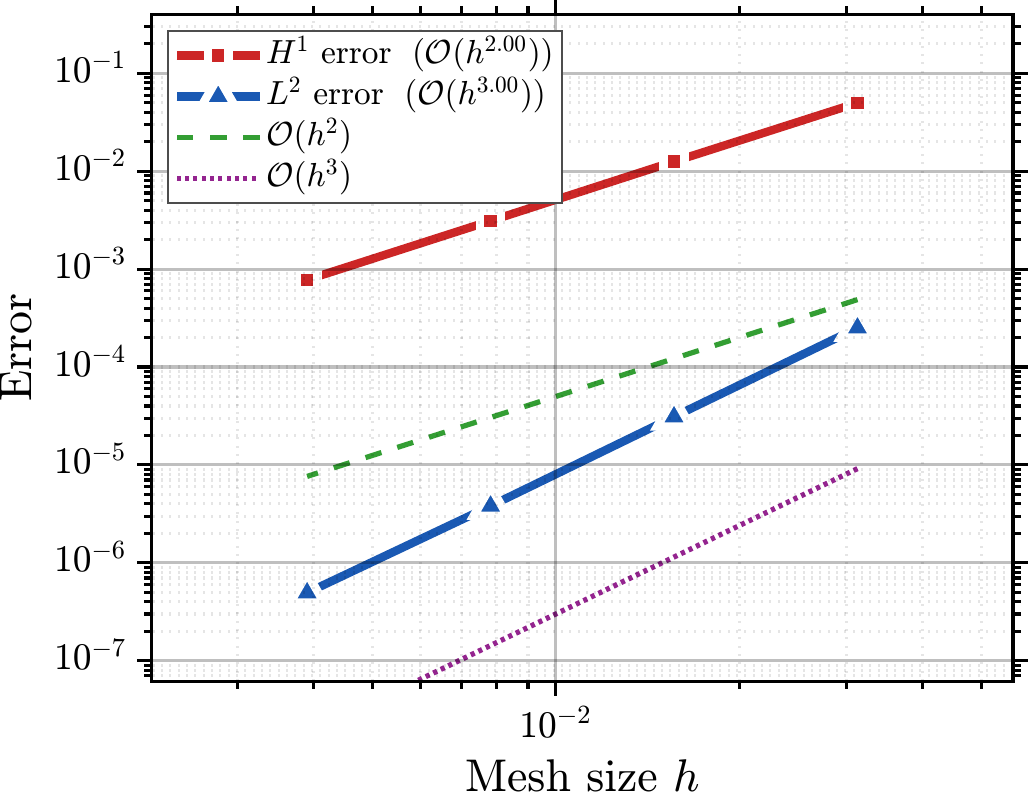}
    \caption{Time-modulated \(\rho(x,t)\), \(\kappa(x,t)\), and \(\sigma(x,t)\): $L^2$-error (blue) and $H^1$-error (red). 
    The dashed lines indicate the expected convergence rates $O(h^2)$ and $O(h^3)$ for comparison.
    }
    \label{fig:conv_scenario2}
\end{figure}

In Fig. \ref{fig:conv_scenario2}, we again observe that the FEM now with leapfrog/Crank-Nicolson time-stepping achieves the expected
(optimal) second-order convergence using $\mathcal{P}_2$-FE and mass-lumping with respect to the $H^1$-norm. 
For the $L^2$-norm, the method even achieves third-order convergence, as expected, as we again let the
time-step $\Delta t$ here decrease proportionally to $h^{3/2}$ to match the spatial accuracy.

%
 
\subsection{Chain of time-modulated subwavelength resonators}
To illustrate the complex behavior of waves propagating in a time-varying medium, we finally
consider waves travelling across a structured metamaterial operating in the subwavelength regime; hence, the characteristic scale of the microstructure is much smaller than the incident wavelength. 
Again we consider \eqref{eq:rhokappa}, but the medium now consists of $N_r = 50$ time-modulated resonators arranged in a uniform chain. The resonators $R_i = (x_i^-,x_i^+)$, each of unit length $x_i^+ - x_i^- = 1$, are separated by a gap $\ell_{ij} = 1$ of unit length, as shown in Fig. \ref{fig:resonators}, and thus form a periodic lattice in $x \in (0, 100)$. The resonating region is
embedded into a much larger computational domain $\Omega = (-500, 500)$ to
properly resolve the long-wavelength incident and scattered waves.

\begin{figure}[htbp]
\centering
\begin{tikzpicture}[>=stealth]
    \draw[blue, ultra thick, dotted] (0,0) -- (7,0);
    
    \def\resonatorX{{1, 2.7, 5}}
    
    \foreach \i [count=\j] in {1, 2, 3}{
        \draw[green!70!black, ultra thick] (\resonatorX[\j-1]-0.1,-0.1) rectangle (\resonatorX[\j-1]+0.6,0.1);
        
        \draw[fill, ultra thick] (\resonatorX[\j-1]-0.1,-0.15) rectangle (\resonatorX[\j-1]-0.08,-0.05);
        \node[below] at (\resonatorX[\j-1]-0.09,-0.15) {\footnotesize $x_{\i}^{-}$};
        
        \draw[fill, ultra thick] (\resonatorX[\j-1]+0.6,-0.15) rectangle (\resonatorX[\j-1]+0.62,-0.05);
        \node[below] at (\resonatorX[\j-1]+0.61,-0.15) {\footnotesize $x_{\i}^{+}$};
        
        \node[above] at (\resonatorX[\j-1]+0.25,0.15) {$R_{\i}$};
    }
    
    \draw[->] (7,0) -- (8,0) node[right] {$\mathbb{R}$};

    \draw[black, ultra thick, decorate, decoration={brace, amplitude=10pt}, yshift=3pt] (1.6,0.1) -- (2.6,0.1) node[midway, above=12pt, black] {$\ell_{12}$};
    \draw[black, ultra thick, decorate, decoration={brace, amplitude=10pt}, yshift=3pt] (3.3,0.1) -- (4.9,0.1) node[midway, above=12pt, black] {$\ell_{23}$};

    \draw[fill, ultra thick] (\resonatorX[0]+1,0.0) rectangle (\resonatorX[0]+1,-0.15);
    \node[below] at (\resonatorX[0]+1.02,-0.2) {\textbf{0}};    
    
\end{tikzpicture}
\caption{Schematic of three time-modulated resonators $R_i$ (green) embedded in a background medium (dashed blue line). The distance between consecutive resonators is denoted by $\ell_{i,i+1}$.}
\label{fig:resonators}
\end{figure}
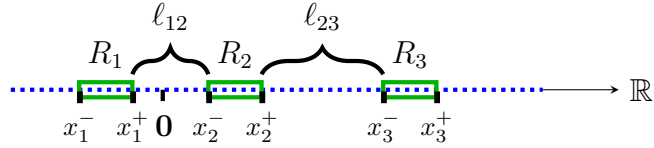

In contrast to the previous two numerical examples, the material parameters now exhibit a sharp contrast between the static background and the dynamically modulated resonators:
\begin{equation*}
\rho(x,t) = 
\begin{cases}
\rho_0 = 1, & x \in \Omega, \\[4pt]
\frac{\rho_r} {1 + \alpha_{\rho} \cos(\omega_{\rho} t) }, & x \in R_i,
\end{cases}
\qquad
\kappa(x,t) = 
\begin{cases}
\kappa_0 = 1, & x \in \Omega, \\[4pt]
\frac{\kappa_r} {1 + \alpha_{\kappa} \cos(\omega_{\kappa} t) }, & x \in R_i,
\end{cases}
\end{equation*}
where $\alpha_\rho = 0.2$, $\alpha_\kappa = 0.4$, $ \rho_r = \kappa_r =0.1$ and $\omega_\rho = \omega_\kappa=2\pi$.

\begin{figure}[t]
  \centering
  \begin{subfigure}[t]{0.45\textwidth}
    \centering
    \includegraphics[width=\textwidth]{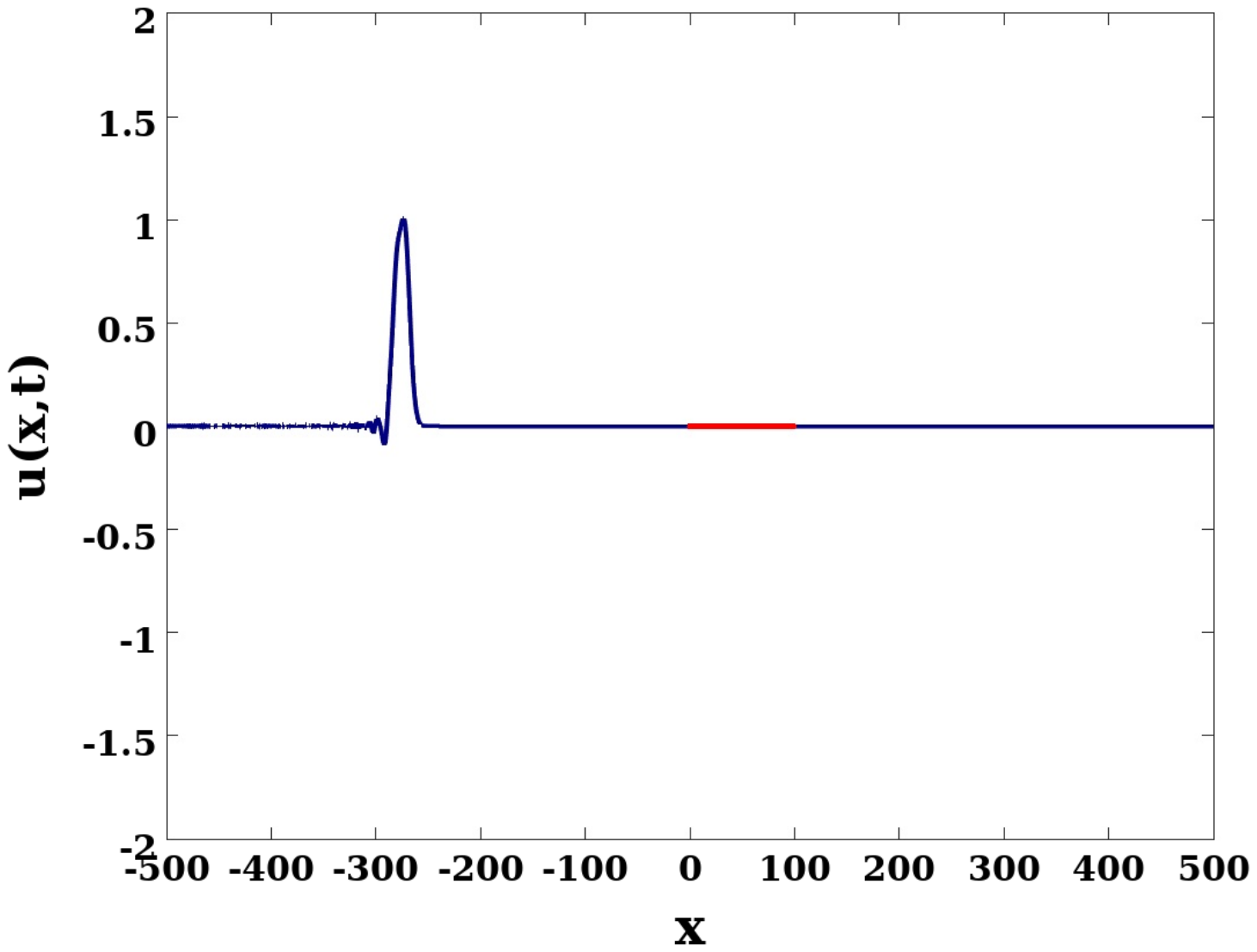}
  \end{subfigure}
  \hfill
  \begin{subfigure}[t]{0.45\textwidth}
    \centering
    \includegraphics[width=\textwidth]{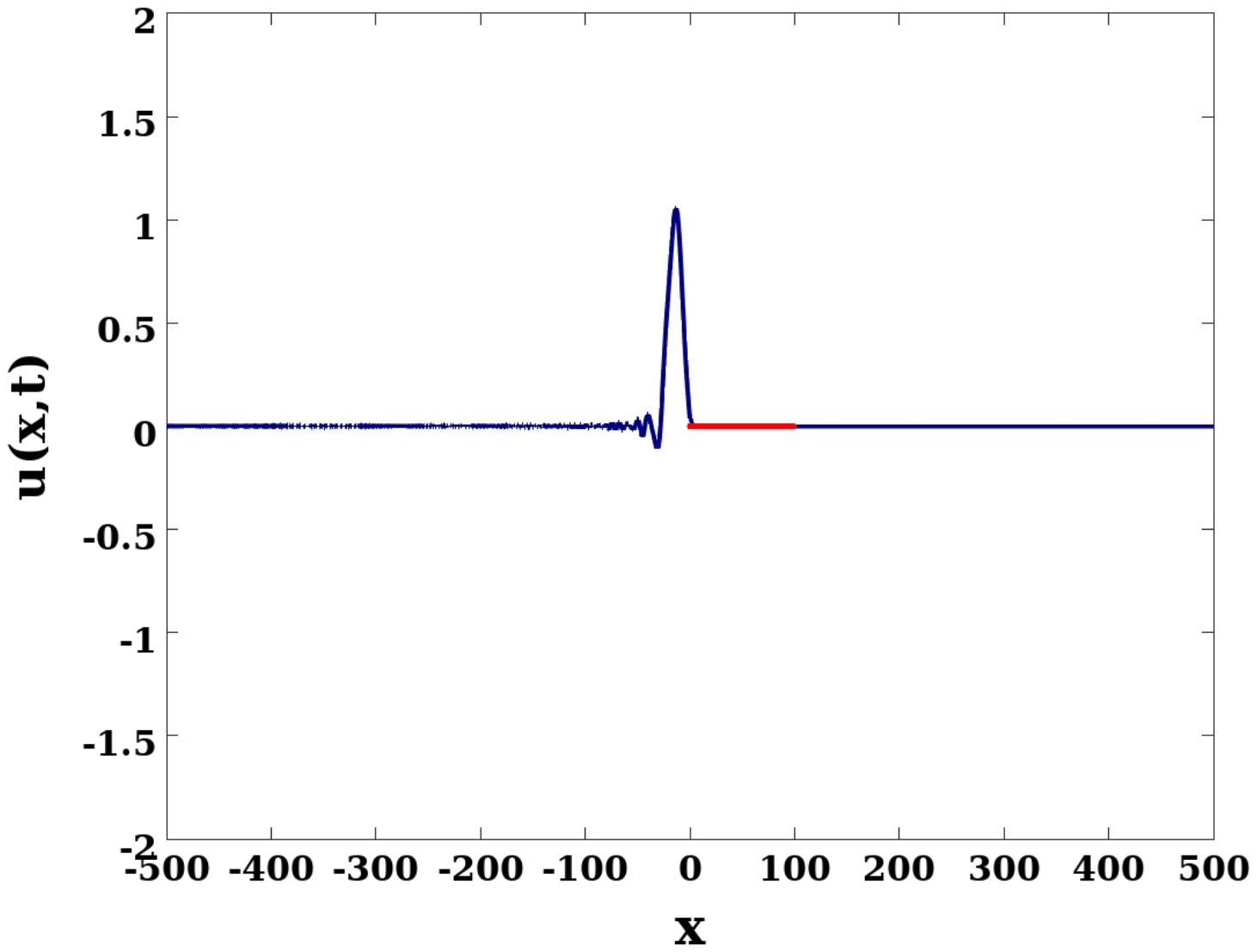}
  \end{subfigure}

  \vspace{0.3cm}

  \begin{subfigure}[t]{0.45\textwidth}
    \centering
    \includegraphics[width=\textwidth]{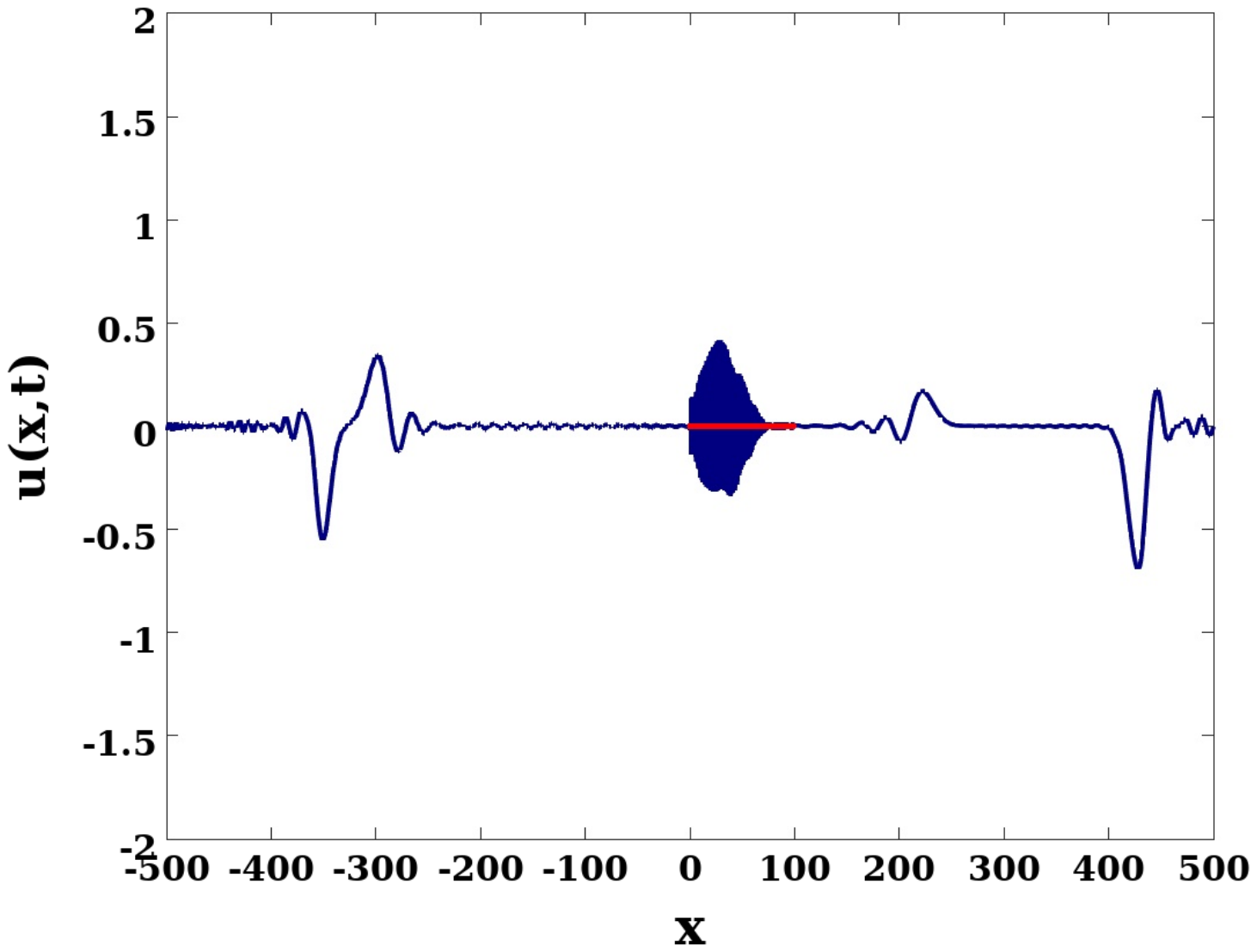}
  \end{subfigure}
  \hfill
  \begin{subfigure}[t]{0.45\textwidth}
    \centering
    \includegraphics[width=\textwidth]{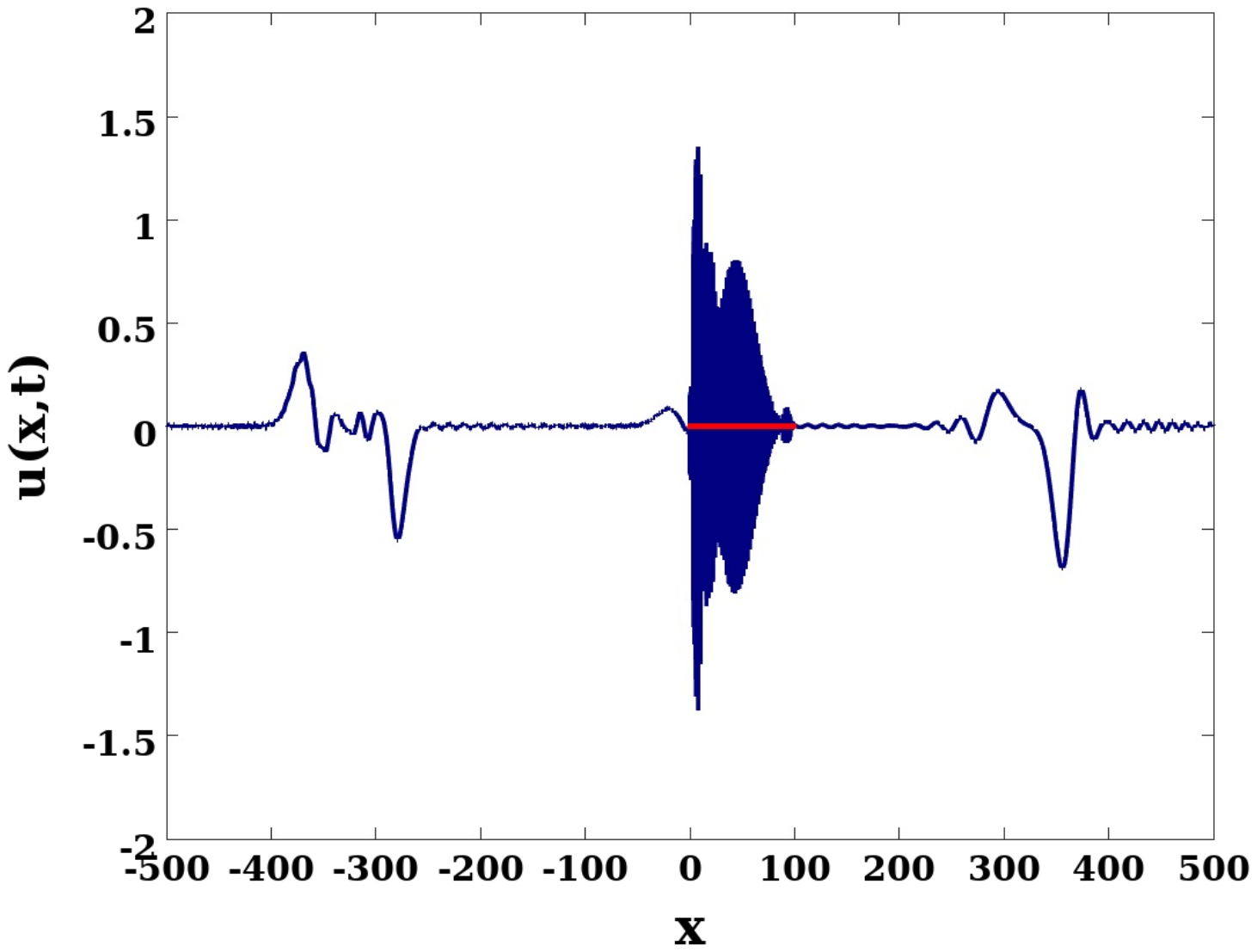}
  \end{subfigure}

  \caption{Chain of time-modulated subwavelength resonators: snapshots of the numerical solution at times $t = 238, 500, 1164, 1236$.}
  \label{fig:metamaterial_wave}
\end{figure}

In Fig.  \ref{fig:metamaterial_wave} we observe how a right-moving pulse enters the resonating time-modulated region
at about time $t \approx 800$. As the incident wave crosses and interacts with the time-modulated meta-material, 
%
%
we observe strong spatial localization and field amplification within the chain of time-modulated resonators, which can be interpreted as a manifestation of "wave localization in a space-time disordered medium". 
While the spatial periodicity induces Bragg scattering, the coherent temporal modulation introduces a dynamic scattering mechanism analogous to that described for purely time-varying random media \cite{PhysRevLett.127.094101}. 
In such media, the interplay of multiple scattering events in the time domain leads to exponential energy growth and log-normal field statistics.
  In our deterministic setup, the synchronized modulation acts as a coordinated series of parametric 'kicks,' transferring energy from the modulation pump to the wave and resulting in a hybrid localized mode
  which grows exponentially \cite{PhysRevLett.127.094101}.
%
Hence the drastic localized wave field enhancement is not mere amplification but evidence of a
macroscopically localized, exponentially growing state engineered by
spatiotemporal coherence.

 \section*{Acknowledgements} This work was supported by the Swiss National Science Foundation under grant SNF 200020-188583.
  
\bibliographystyle{abbrv}
\bibliography{references}

\end{document}